\newtheorem{theorem}{Theorem}[section]
\newtheorem{corollary}[theorem]{Corollary}
\newtheorem{lemma}[theorem]{Lemma}
\newtheorem{remark}[theorem]{Remark}
\newtheorem{definition}[theorem]{Definition}
\numberwithin{equation}{section}
\newcommand{\hL}{\mathcal{L}}
\def\n{\partial{\overrightarrow{\bf n}}}
\begin{document}

\title[Inverse Random Source Problems for Stochastic Heat and Wave Equations]{Uniqueness of Inverse Random Source Problems for Stochastic Heat and Wave Equations}

\author{Xu Wang}
\address{State Key Laboratory of Mathematical Sciences, Academy of Mathematics and Systems Science, Chinese Academy of Sciences, Beijing 100190, China, and School of Mathematical Sciences, University of Chinese Academy of Sciences, Beijing 100049, China.}
\email{wangxu@lsec.cc.ac.cn}

\author{Guanlin Yang}
\address{Corresponding author. State Key Laboratory of Mathematical Sciences, Academy of Mathematics and Systems Science, Chinese Academy of Sciences, Beijing 100190, China, and School of Mathematical Sciences, University of Chinese Academy of Sciences, Beijing 100049, China.}
\email{yanguanlin@lsec.cc.ac.cn}

\author{Zhidong Zhang}
\address{School of Mathematics (Zhuhai), Sun Yat-sen University, Zhuhai 519082, Guangdong, China, and Guangdong Province Key Laboratory of Computational Science, Sun Yat-sen University, Guangzhou 510000, Guangdong, China.}
\email{zhangzhidong@mail.sysu.edu.cn}

\thanks{Xu Wang is supported by the National Key R\&D Program of China (2024YFA1015900 and 2024YFA1012300), the NSF
of China (12288201), and the CAS Project for Young Scientists in Basic Research (YSBR-087). Zhidong Zhang is supported by the National Key R\&D Plan of China (2023YFB3002400).}

\subjclass[2010]{35R30, 35R60, 65M32}

\keywords{inverse random source problem, uniqueness, stochastic heat equation, stochastic wave equation, stochastic strong solution}

\begin{abstract}
This paper investigates an inverse random source problem for stochastic evolution equations, including stochastic heat and wave equations, with the unknown source modeled as \( g(x)f(t)\dot{W}(t) \). The research commences with the establishment of the well-posedness of the corresponding stochastic direct problem. Under suitable regularity conditions, the existence of stochastic strong solutions for both the stochastic heat and wave equations is demonstrated. For the inverse problem, the objective is to uniquely recover the strength \(|f(t)|\) of the time-dependent component of the source from the boundary flux on a nonempty open subset. The uniqueness of the recovery for both the stochastic heat and wave equations is proven, and several numerical examples are given to verify the theoretical results.
\end{abstract}

\maketitle

\section{Introduction}

\subsection{Problem and background}
We study in this paper an inverse random source problem for both heat and wave equations driven by a Gaussian random noise, which can be summarized as the following stochastic partial differential equation (SPDE) form:
\begin{equation} \label{eq:model}
\left\{
\begin{aligned}
\partial_t^m u(x, t) -\Delta u(x, t) &= g(x)f(t)\dot{W}(t), && (x, t) \in D \times (0, T], \\
u(x, t) &= 0, && (x, t) \in \partial D \times [0, T],
\\
u(x, 0) &= 0, && x \in D,\\
\partial_t u(x, 0) &= 0, && x \in D,\quad \text{(only if $m=2$)}.
\end{aligned}
\right.
\end{equation}
Here, the parameter \(m \in\{1, 2\}\), the spatial domain \(D \subset \mathbb{R}^d\) is assumed to be bounded with a Lipschitz boundary, and the source term is given in a separable form, where \(f : [0,T] \to \mathbb{R}\) and \(g : D \to \mathbb{R}\) are  deterministic functions, and $\dot{W}$ is the time-dependent white noise defined on a filtered probability space $(\Omega, \mathcal{F}, \{\mathcal{F}_t\}_{t\geq 0}, \mathbb{P})$. In the source term, the function $f$ is unknown, while the spatial component $g$ is controllable. We use the boundary measurements and denote the observation region by $D_{o b} \subset \partial D$. The mathematical formula for the flux data is given by 
$\frac{\partial u}{\partial \overrightarrow{\mathbf{n}}}|_{D_{o b} \times(0, T)}.$
So the inverse random source problem in this work can be described as follows:
\begin{equation}\label{inverse_problem}
\text { use }\left.\frac{\partial u}{\partial \overrightarrow{\mathbf{n}}}\right|_{D_{ob} \times(0, T)} \text { to uniquely recover the strength } |f| \text{ of the unknown } f \text {.}
\end{equation}

The research on inverse source problems in the deterministic setting has a long history and many representative works are generated. We refer to the books \cite{JR84, I90, POV00} for a comprehensive survey. Denote the unknown source by $F(x,t)$. To establish the well-posedness of the inverse source problem in the sense of Hadamard, a priori information concerning the source $F(x,t)$ is necessary. Based on different types of measurements, most of existing works can be briefly classified into recovering space-dependent or time-dependent part of the source from the space-dependent or time-dependent data (cf. \cite{JR68,CE76,R80,I91,HR01,PT03}). Furthermore, \cite{RZ20,LZZ22} considered the case $F(x,t)=g(x)f(t)$ when $g(x)$ and $f(t)$ are both unknown. It is worth mentioning that to ensure the uniqueness of the inverse problem, some prior regularity assumptions on $g$ and $f$ are needed, usually within a proper subspace of $L^2(D)$ and $L^2([0,T])$. 

However, the research on the recovery of sources with uncertainties, which are known as inverse random source problems, seems relatively scarce. The challenges lie in the roughness of the random noise. Specifically, the Gaussian white noise $\dot{W}(t)$ considered in the present paper belongs to the Sobolev space $W^{-\frac{1+\epsilon}{2},p}([0,T])$ of negative order with $\epsilon>0$ being arbitrarily small and $p>1$, which results in low regularity of the solution. The roughness leads to the failure of the methods that work for the deterministic case, and new framework should be explored for inverse random source problems. 

For time-harmonic equations, some literature has focused on the numerical computation, uniqueness theory, and stability theory of inverse random source problems. 
For example, \cite{BCL16,BCL17} considered random sources driven by Gaussian white noise in the form $g(x)\dot{W}(x)$, and studied the numerical computation of the strength function $g^2$.
Authors in \cite{LW2,LW3,LLM21} investigated the uniqueness for the recovery of the strength of generalized Gaussian random sources based on the microlocal analysis and theory of pseudo-differential operators. The stability of the recovery was studied in \cite{LL24} for a random source driven by Gaussian white noise, and in \cite{WXZ24} for generalized Gaussian random sources.

For time-dependent problems, the existence of both space and time variables makes the framework for inverse random source problems quite different from the time-harmonic case.
Most studies focus on the case that the random source is space-time separable, and recover either the temporal component $f(t)$ or the spatial component $g(x)$ involved in the random source. 
For the recovery of the spatial component $g(x)$, we refer to \cite{NHZ20,FLW20,FZLW22,LLZ23} for the uniqueness result, together with the recovery formula, under conditions that $g\in L^2(D)$ and the randomness lies only in the temporal direction such that the spectral expansion or the unique continuation principle can be utilized.
For the recovery of the temporal component $f(t)$, \cite{GLWX21,FYLW24} employed the Fourier transform to uniquely determine the phaseless Fourier modes $|\hat f(\xi)|$ under conditions that $f\in L^2([0,T])$ and the randomness lies in the spatial direction. Then the PhaseLift method was applied to solve the phase retrieval problem and derive numerically the strength $|f(t)|$.
We also refer to \cite{LWZ20,LZ15} for stability estimates of inverse random source problems driven by time-dependent random noise.


In this article, we focus on the unique recovery of the temporal component $f(t)$ in the random source $g(x)f(t)\dot W(t)$, and aim to propose a unified approach, together with a computable recovery formula, for both heat and wave equations. 
Since the unknown temporal function $f$ is coupled with the noise $\dot{W}(t)$, it is hard to separate the target function from the stochastic integral, as achieved by the Fourier transform method or the spectral expansion approach in the existing results.

\subsection{Main results and outline}
For the direct problem related to SPDE  \eqref{eq:model}, we focus on the existence and uniqueness of the stochastic strong solution other than stochastic mild or weak solutions, primarily motivated by the analytical framework required for the inverse problem. More precisely, to establish the uniquely recovery result, an auxiliary function framework, together with Green's formula, will be utilized, which requires a higher spatial regularity of the solution $u$ (see Theorem \ref{thm:strong}).

Denote by $\dot{H}^s(D)=Dom((-\Delta)^{\frac s2})$ with $s\ge0$, where the precise characterization of $\dot{H}^s(D)$ using spectral expansion and its relation to classical Sobolev spaces will be given in Section 2. The well-posedness theorem for the direct problem of \eqref{eq:model} is stated as follows.

\begin{theorem}\label{thm:direct}
Assume that $f\in L^2([0,T])$ and $g\in \dot{H}^1(D)$. Then \eqref{eq:model} admits a unique strong solution for $m\in \{1,2\}$. Moreover, the strong solution $u\in L^2([0,T];\dot{H}^2(D))$  for $\mathbb P$-a.s. $\omega\in\Omega$.
\end{theorem}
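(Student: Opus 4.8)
The plan is to diagonalize the Dirichlet Laplacian and reduce \eqref{eq:model} to a countable family of scalar linear stochastic differential equations. Let $\{(\lambda_k,e_k)\}_{k\in\N}$ be the eigenpairs of $-\Delta$ on $D$ with homogeneous Dirichlet boundary conditions, so that $0<\lambda_1\le\lambda_2\le\cdots\to\infty$, $\{e_k\}$ is an orthonormal basis of $L^2(D)$, and $\|v\|_{\dot H^s(D)}^2=\sum_k\lambda_k^s|(v,e_k)_{L^2(D)}|^2$; in particular the hypothesis $g\in\dot H^1(D)$ reads $\sum_k\lambda_k g_k^2<\infty$ with $g_k:=(g,e_k)_{L^2(D)}$. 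Writing $u(\cdot,t)=\sum_k u_k(t)e_k$, the $k$-th mode must satisfy $u_k'+\lambda_k u_k=g_k f\dot W$ (if $m=1$) or $u_k''+\lambda_k u_k=g_k f\dot W$ (if $m=2$) with vanishing initial data; since $f\in L^2([0,T])$ is deterministic and $\dot W$ is a time white noise, these are solved by the Itô stochastic convolutions
\[
u_k(t)=g_k\int_0^t e^{-\lambda_k(t-s)}f(s)\,dW(s)\quad(m=1),\qquad
u_k(t)=\frac{g_k}{\sqrt{\lambda_k}}\int_0^t\sin\!\big(\sqrt{\lambda_k}(t-s)\big)f(s)\,dW(s)\quad(m=2),
\]
each of which is well defined, $\{\F_t\}$-adapted, and continuous in $t$.

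The core step is to show that $u:=\sum_k u_k e_k$ converges in $L^2([0,T];\dot H^2(D))$ almost surely. By Itô's isometry, $\E|u_k(t)|^2=g_k^2\int_0^t e^{-2\lambda_k(t-s)}f(s)^2\,ds$ for $m=1$ and $\E|u_k(t)|^2\le\lambda_k^{-1}g_k^2\int_0^T f(s)^2\,ds$ for $m=2$. Multiplying by $\lambda_k^2$, integrating over $t\in[0,T]$, exchanging the sum and the time integrations by Tonelli's theorem, and using $\lambda_k^2\int_s^T e^{-2\lambda_k(t-s)}\,dt\le\lambda_k/2$ in the parabolic case, one obtains in both cases
\[
\E\int_0^T\|u(t)\|_{\dot H^2(D)}^2\,dt\ \le\ C_{m,T}\,\|f\|_{L^2([0,T])}^2\sum_k\lambda_k g_k^2\ =\ C_{m,T}\,\|f\|_{L^2([0,T])}^2\,\|g\|_{\dot H^1(D)}^2\ <\ \infty,
\]
with $C_{1,T}=\tfrac12$ and $C_{2,T}=T$. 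Consequently $\int_0^T\|u(t)\|_{\dot H^2(D)}^2\,dt<\infty$ for $\PP$-a.s. $\omega$, so the modal partial sums converge in $L^2([0,T];\dot H^2(D))$ almost surely (they are Cauchy in $L^2(\Omega;L^2([0,T];\dot H^2(D)))$), and $(-\Delta)u\in L^2([0,T];L^2(D))$ almost surely. Repeating the computation with $u_k$ replaced by $u_k'(t)=g_k\int_0^t\cos(\sqrt{\lambda_k}(t-s))f(s)\,dW(s)$ gives, when $m=2$, $\partial_t u\in L^2([0,T];\dot H^1(D))$ almost surely, so that $(u,\partial_t u)$ lies in the natural energy space.

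It remains to verify that this $u$ is a strong solution and that strong solutions are unique. For existence one checks, mode by mode, that the explicit formulas satisfy the integrated equation: $u_k(t)=-\lambda_k\int_0^t u_k(s)\,ds+g_k\int_0^t f(s)\,dW(s)$ when $m=1$, and $u_k'(t)=-\lambda_k\int_0^t u_k(s)\,ds+g_k\int_0^t f(s)\,dW(s)$ together with $u_k(t)=\int_0^t u_k'(s)\,ds$ when $m=2$ — each identity following from the deterministic and stochastic Fubini theorems applied to the convolution kernels. Summing over $k$ and interchanging the series with the time and Itô integrals, which is legitimate by the a priori $L^2([0,T];\dot H^2(D))$ bound just established, yields the required identity $u(t)+\int_0^t(-\Delta)u(s)\,ds=g\int_0^t f(s)\,dW(s)$ in $L^2(D)$ (and its first-order-system analogue for $m=2$). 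Uniqueness is then immediate: the difference $v$ of two strong solutions has modes $v_k$ solving the homogeneous ODE $v_k'+\lambda_k v_k=0$ (resp.\ $v_k''+\lambda_k v_k=0$) with zero initial data, whence $v_k\equiv0$ for every $k$ and $v\equiv0$. I expect the main obstacles to be twofold: pinning down the correct functional-analytic notion of a \emph{strong} solution for the second-order-in-time wave case, where $\partial_t^2 u$ exists only distributionally because of the white-noise forcing; and rigorously justifying the exchange of the infinite modal sum with the time and stochastic integrals when passing from the scalar identities back to the SPDE — precisely the point at which the $\dot H^2$ a priori bound, and hence the hypothesis $g\in\dot H^1(D)$, enters.
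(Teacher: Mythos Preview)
Your proposal is correct and is essentially the paper's own approach: the paper also works in the eigenbasis of $-\Delta$ (packaged as the semigroup $S(t)=e^{t\Delta}$ and, for $m=2$, the cosine/sine operators acting on the first-order system), derives the same $L^2(\Omega;L^2([0,T];\dot H^2))$ bound via It\^o's isometry and the elementary estimate $\int_0^t e^{-2\lambda_k(t-s)}f(s)^2\,ds\le \|f\|^2/(2\lambda_k)$, and then upgrades the mild solution to a strong one by the stochastic Fubini theorem applied to the convolution kernels. The only cosmetic differences are that the paper proves uniqueness for $m=1$ by a short energy argument on $v=u_1-u_2$ rather than mode by mode, and that your parabolic constant $C_{1,T}=\tfrac12$ is in fact sharper than the paper's $T/2$ because you swap the $t$-integral and the bound in the opposite order.
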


It is also worth noting that Theorem \ref{thm:direct} indicates that the solutions to the stochastic heat and wave equations exhibit the same regularity under identical source terms. This is quite different from the deterministic case, where parabolic equations exhibit a stronger regularizing effect than hyperbolic equations, typically leading to higher regularity of solutions even with the same source term. The reason is that the regularity of solutions of SPDE \eqref{eq:model} is determined by the regularity of the stochastic integral term, and hence is limited by the regularity of the random source. 

For the inverse random source problem \ref{inverse_problem}, we give the uniqueness theorem as follows.

\begin{theorem}\label{thm:inverse}
Under the assumptions of Theorem~\ref{thm:direct}, assume further that $g \in \dot{H}^{d-1+\epsilon}(D)$ for some arbitrary $\epsilon > 0$, where $d \in \{1,2,3\}$ denotes the spatial dimension. Then $|f|$ can be uniquely determined by measurements \small{$\frac{\partial u}{\partial \overrightarrow{\mathbf{n}}}\Big|_{D_{o b} \times(0, T)}$} for $m\in\{1,2\}$, where $D_{o b} \subseteq \partial D$ is a nonempty open subset of $\partial D$.
\end{theorem}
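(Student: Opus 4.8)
The strategy is to reconstruct $|f|^2$ from the second moments of the measured flux, using that the solution depends linearly on $\dot{W}$. The role of the enhanced hypothesis $g\in\dot{H}^{d-1+\epsilon}(D)$ is to push the spatial regularity of the strong solution of Theorem~\ref{thm:direct} high enough — the stochastic convolution gains essentially one spatial derivative over $g$, for both $m=1$ and $m=2$ — that $\partial_{\overrightarrow{\mathbf{n}}}u(\cdot,t)$ is a genuine function on $D_{ob}$ for a.e.\ $t$ and $\PP$-a.s., and the Green-type identities below are legitimate. Writing the strong solution in mild form, $u(x,t)=\int_0^t[\hS_m(t-s)g](x)\,f(s)\,dW(s)$ with $\hS_1(\tau)=e^{\tau\Delta}$ the Dirichlet heat semigroup and $\hS_2(\tau)=\sin(\tau\sqrt{-\Delta})/\sqrt{-\Delta}$ the Dirichlet sine propagator, I set $H(x,\tau):=\partial_{\overrightarrow{\mathbf{n}}}[\hS_m(\tau)g](x)$, which is known since $g$ is prescribed, and note $\partial_{\overrightarrow{\mathbf{n}}}u(x,t)=\int_0^t H(x,t-s)\,f(s)\,dW(s)$. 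Equivalently — this is the ``auxiliary function together with Green's formula'' route alluded to in the introduction — for $\eta\in C_c^\infty(D_{ob}\times(0,T))$ one lets $\psi_\eta$ solve the homogeneous equation backward in time with vanishing terminal data and lateral trace $\eta$, and Green's identity gives $\int_0^T\!\!\int_{D_{ob}}\partial_{\overrightarrow{\mathbf{n}}}u\,\eta\,dx\,dt=-\int_0^T\langle g,\psi_\eta(\cdot,t)\rangle\,f(t)\,dW(t)$; the two formulations carry the same information.

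Next I would take second moments: since the stochastic integral is mean-zero Gaussian, the It\^o isometry yields, for $(x,t)\in D_{ob}\times(0,T)$,
\[
\E\bigl[\,|\partial_{\overrightarrow{\mathbf{n}}}u(x,t)|^2\,\bigr]=\int_0^t |H(x,\tau)|^2\,|f(t-\tau)|^2\,d\tau=\bigl(|H(x,\cdot)|^2*|f|^2\bigr)(t),
\]
so the covariance of the measured flux determines the left-hand side, a convolution of the \emph{known} nonnegative kernel $|H(x,\cdot)|^2$ against $|f|^2$. If $f_1,f_2\in L^2([0,T])$ produce the same data, then $|H(x,\cdot)|^2*(|f_1|^2-|f_2|^2)=0$ on $(0,T)$ for every $x\in D_{ob}$, and by Titchmarsh's convolution theorem it suffices to exhibit one $x_0\in D_{ob}$ for which $|H(x_0,\cdot)|^2$ is not identically zero on any subinterval $(0,\delta)$: then the left endpoint of the support of $|f_1|^2-|f_2|^2$ is $\geq T$, hence $|f_1|=|f_2|$ a.e.\ on $[0,T]$. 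Everything therefore reduces to this nondegeneracy of the kernel near $\tau=0$.

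For the heat equation ($m=1$) the nondegeneracy is essentially automatic once $g\not\equiv0$. Indeed $\tau\mapsto e^{\tau\Delta}g$ is real-analytic from $(0,\infty)$ into every $\dot{H}^s(D)$, so $\tau\mapsto\|H(\cdot,\tau)\|_{L^2(D_{ob})}^2$ is real-analytic on $(0,\infty)$ and is either identically zero or nonzero off a discrete set; the former would mean $v:=e^{\cdot\Delta}g$ has Cauchy data $(v,\partial_{\overrightarrow{\mathbf{n}}}v)=(0,0)$ on $D_{ob}\times(0,T)$ in addition to $v|_{\partial D}=0$, and the unique continuation principle for the heat equation would force $v\equiv0$, whence $g=\lim_{\tau\to0^+}v(\cdot,\tau)=0$, a contradiction. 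Thus the support of $\|H(\cdot,\tau)\|_{L^2(D_{ob})}^2$ has left endpoint $0$ and the Titchmarsh step closes the heat case.

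The wave equation ($m=2$) is the main obstacle: analyticity in $\tau$ is lost, and by finite propagation speed $H(\cdot,\tau)$ can genuinely vanish on $D_{ob}$ for $\tau<\operatorname{dist}(\operatorname{supp}g,D_{ob})$. Here I would use the short-time asymptotics $\partial_{\overrightarrow{\mathbf{n}}}[\hS_2(\tau)g]=\tau\,\partial_{\overrightarrow{\mathbf{n}}}g+o(\tau)$ together with the natural nondegeneracy of the controllable source, $\partial_{\overrightarrow{\mathbf{n}}}g\not\equiv0$ on $D_{ob}$, which gives $\|H(\cdot,\tau)\|_{L^2(D_{ob})}^2=\tau^2\|\partial_{\overrightarrow{\mathbf{n}}}g\|_{L^2(D_{ob})}^2+o(\tau^2)$, so again the kernel's support has left endpoint $0$; alternatively one invokes the unique continuation principle for the wave equation from the lateral Cauchy data $(v,\partial_{\overrightarrow{\mathbf{n}}}v)=(0,0)$ on $D_{ob}\times(0,T)$ with $v|_{\partial D}=0$ (for $T$ sufficiently large) to exclude that the kernel vanishes identically near $\tau=0$. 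With the kernel nondegenerate, Titchmarsh's theorem gives $|f_1|=|f_2|$ and the theorem follows for $m\in\{1,2\}$. To summarize, the two points needing care are (i) upgrading the solution's spatial regularity so that the flux on $D_{ob}$ and Green's identity make sense, and (ii) the wave-equation nondegeneracy of the convolution kernel, which is where finite speed of propagation makes the argument substantially more delicate than for the heat equation.
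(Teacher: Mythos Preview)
Your overall strategy---pass to the variance via It\^o's isometry to obtain a Volterra convolution $|H(x,\cdot)|^2*|f|^2$ and then invoke Titchmarsh---is exactly the paper's. The execution diverges in two respects, and the second is a genuine gap.

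First, the paper does \emph{not} work with the pointwise flux but with its time integral $\int_0^t\partial_{\overrightarrow{\mathbf n}}u(z,\tau)\,d\tau$, which it shows equals $\int_0^t f(\tau)G_z(t-\tau)\,dW(\tau)$ with
\[
G_z(t)=\sum_{n\ge1} c_{z,n}\,g_n\,\bigl(1-e^{-\lambda_n t}\bigr)\quad\text{resp.}\quad\sum_{n\ge1} c_{z,n}\,g_n\,\bigl(1-\cos(\sqrt{\lambda_n}\,t)\bigr),
\qquad c_{z,n}=-\lambda_n^{-1}\,\partial_{\overrightarrow{\mathbf n}}\phi_n(z).
\]
The extra factor $\lambda_n^{-1}$ in $c_{z,n}$ (morally coming from the time integration) together with the Hassell--Tao bound $\|\partial_{\overrightarrow{\mathbf n}}\phi_n\|_{L^2(\partial D)}\lesssim\lambda_n^{1/2}$ and Weyl's law is precisely what makes $\sum_n|c_{z,n}g_n|$ converge in $L^2(\partial D)$ under $g\in\dot H^{d-1+\epsilon}$. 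Your pointwise kernel $H(z,\tau)=\sum_n\lambda_n^{-1/2}\sin(\sqrt{\lambda_n}\tau)\,g_n\,\partial_{\overrightarrow{\mathbf n}}\phi_n(z)$ lacks this factor, and the same count would need roughly $g\in\dot H^{d+\epsilon}$. So the role of the hypothesis is not, as you write, to ``push the spatial regularity of $u$ high enough''; it is calibrated to the convergence of the paper's time-integrated kernel.

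Second, your nondegeneracy argument for $m=2$ introduces hypotheses that are not in the theorem. The short-time expansion $\partial_{\overrightarrow{\mathbf n}}[\hS_2(\tau)g]=\tau\,\partial_{\overrightarrow{\mathbf n}}g+o(\tau)$ is useless unless $\partial_{\overrightarrow{\mathbf n}}g\not\equiv0$ on $D_{ob}$, which is nowhere assumed and fails for every $g\in C_c^\infty(D)$; your fallback, unique continuation for the wave equation from lateral Cauchy data, requires $T$ large relative to the geometry, again not assumed. The paper sidesteps both by reasoning directly on the eigenfunction expansion: since $g\not\equiv0$, its projection onto some eigenspace is a nonzero Dirichlet eigenfunction $\psi$, and a lemma to the effect that $\partial_{\overrightarrow{\mathbf n}}\psi$ cannot vanish a.e.\ on any nonempty open $\Gamma\subset\partial D$ forces the corresponding spectral coefficient $\sum_{\lambda_n=\lambda}c_{z,n}g_n=-\lambda^{-1}\partial_{\overrightarrow{\mathbf n}}\psi(z)$ to be nonzero for some $z\in D_{ob}$, so $G_z$ is not identically zero; Titchmarsh then finishes. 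That eigenfunction--boundary nondegeneracy lemma is the ingredient your wave-equation argument is missing.
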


Theorem \ref{thm:inverse} confirms that the data from any nonempty open subset of the boundary is sufficient to support the uniqueness of the inverse problem \eqref{inverse_problem}, which could contribute to cost reduction in practical applications of inverse problems. 

The rest part of the paper is outlined as follows. In Section 2, we provide several preliminary results and give the different concepts of solutions for SPDEs in the abstract evolution form. In Section 3, we show the well-posedness of \eqref{eq:model} in the strong sense for both stochastic heat and wave equations, together with properties for strong solutions (Theorem \ref{thm:strong}) that will be used in the study of the inverse problem. Section 4 is devoted to the proof of the uniqueness result for the inverse random source problem stated in Theorem \ref{thm:inverse}, which is derived from a representation formula based on the data (see Lemma~\ref{lem:representaition}). With the uniqueness theorem, several numerical examples are presented in Section 5 to validate the effectiveness of the reconstruction. In Section 6, some concluding remarks together with some directions for future work are given for inverse random source problems.

\section{Preliminaries}
Let $(H,\langle \cdot,\cdot \rangle)$ be a separable Hilbert space endowed with the norm $\|\cdot\|$ induced by the inner product $\langle\cdot,\cdot\rangle$, and $\{W(t)\}_{t\ge0}$ be a real-valued standard Brownian motion defined on a filtered probability space $(\Omega, \mathcal{F}, \{\mathcal{F}_t\}_{t\geq 0}, \mathbb{P})$. Hereinafter, we omit the sample path $\omega\in\Omega$ in a stochastic process and use the notation $X(t)=X(t,\omega)$ for simplicity. In this section, we consider a stochastic process $\{X(t)\}_{t\in[0,T]}$ taking values in $H$, satisfying the following abstract stochastic evolution equation
\begin{equation}\label{eq:abstract}
\left\{
\begin{aligned}
dX(t) & = -AX(t)\,dt + B(t)\,dW(t), \quad t\in(0,T),\\
X(0) & = 0,
\end{aligned}
\right.
\end{equation}
where $A: \mathcal{D}(A) \subset H \rightarrow H$ is a  densely defined, self-adjoint, and positive-definite linear operator with a compact inverse and $B: (0,T) \rightarrow H$ is a deterministic $H$-valued function with $\int_0^t \|B(s)\|^2ds<\infty$. An example of the operator $A$ is the negative Laplacian $-\Delta$ defined on a smooth bounded domain $D \subset \mathbb{R}^d$ with homogeneous Dirichlet boundary conditions.

In this section, we introduce several notations related to the operator $A$, and then present the concepts of mild and strong solutions to stochastic evolution problem \eqref{eq:abstract}. For the  general solution theory to SPDEs driven by space time white noise $\dot{W}$, we refer the readers to \cite{DZ14}.

\subsection{Eigensystem of $A$ and the semigroup $\{S(t)\}_{t\geq0}$}
In this part, we introduce the eigensystem of the operator $A$ and the associated semigroup $\{S(t)\}_{t\geq0}$ generated by $A$. Since $A$ is self-adjoint and positive-definite with a compact inverse, it admits an eigensystem $\{\lambda_n, \phi_n\}_{n=1}^\infty$ satisfying
\[
A \phi_n = \lambda_n \phi_n, \quad 0 < \lambda_1 \leq \lambda_2 \leq \cdots \to \infty,
\]
where $\{\phi_n\}_{n=1}^\infty$ forms an orthonormal basis of $H$.

We define the scale of Hilbert spaces associated with powers of $A$ as follows. For any $s \geq 0$, define the norm
$$
\|v\|_{\dot{H}^s}:=\left\|A^{\frac s2} v\right\|=\left(\sum_{j=1}^\infty \lambda_j^s\left\langle v, \phi_j\right\rangle\right)^{\frac12}, 
$$
and  the corresponding spaces
$$
\dot{H}^s:=Dom(A^{\frac s2})=\left\{v \in H:\|v\|_{\dot{H}^s}<\infty\right\}.
$$

Moreover, the operator $A$ generates a strongly continuous semigroup $\{S(t):=e^{-tA}\}_{t\geq0}$ on $H$, which admits the following spectral representation:
\[
S(t)v = \sum_{n=1}^\infty e^{-\lambda_n t} \langle v, \phi_n \rangle \phi_n, \quad v \in H, \ t \geq 0.
\]
The semigroup $\{S(t)\}_{t\geq0}$ plays an important role in the formulation of solutions to SPDEs.

\subsection{Stochastic mild and strong solutions}

In the theory of SPDEs, it is essential to distinguish between different notions of solutions, depending on the interpretation of the equation and the regularity of coefficients. In this part, we provide the definitions of mild and strong solutions to \eqref{eq:abstract}, which will play a central role in our subsequent analysis.

\begin{definition}[Stochastic mild solution]
An $H$-valued stochastic process $\{X(t)\}_{t\in[0,T]}$ is called a mild solution of  \eqref{eq:abstract} if 
\begin{equation}\label{solution:mild}
X(t)=\int_0^t S(t-s)B(s)dW(s) \quad \mathbb P\text{-a.s.}
\end{equation}
for $t\in[0,T]$ with the stochastic integral on the right-hand side being well-defined.
\end{definition}
\begin{remark} \label{rem:mild}
Assume that $B\in L^2([0,T];H)$, i.e., $\int_0^t \|B(s)\|^2ds<\infty$, the stochastic integral in \eqref{solution:mild} is well-defined, since
\[
\int_0^t\|S(t-s)B(s)\|^2ds \leq \int_0^t \|S(t-s)\|^2_{\hL (H)} \|B(s)\|^2ds <\infty,
\]
where the uniform boundedness property of semigroup $\|S(t-s)\|_{\hL (H)}\leq C$ (cf.\cite[Lemma B.9 (1)]{K14}) is utilized.
\end{remark}

In our later analysis, we would require stronger spatial regularities of the solution, which is fulfilled in the sense of stochastic strong solution.

\begin{definition}[Stochastic strong solution]
A $Dom(A)$-valued stochastic process $\{X(t)\}_{t\in[0,T]}$ is called a strong solution of  \eqref{eq:abstract} if 
\begin{equation}\label{solution:strong}
X(t)=-\int_{0}^{t}A X(s)ds+\int_0^t B(s)dW(s) \quad \mathbb P\text{-a.s.}
\end{equation}
for  $t\in[0,T]$ with integrals on the right-hand side being well-defined, where $Dom(A)$ denotes the domain of the operator $A$.
\end{definition}

For simplicity, the notation `$\mathbb P$-a.s.' is omitted in the following sections if there is no confusion.

It is well known that a strong solution to \eqref{eq:abstract} is also a mild solution (cf. \cite[Section 5.1]{DZ14}). Conversely, we will demonstrate in the next section that the stochastic mild solution becomes a strong solution under additional assumptions on the coefficients.

\section{Well-posedness of the direct problem}

In this section, we study the existence and uniqueness of the strong solution to \eqref{eq:model} and present properties of the strong solution that will be used in the subsequent analysis for the inverse problem. To begin with, we rewrite \eqref{eq:model} into the framework of the abstract form \eqref{eq:abstract}, where the cases of the heat equation (\(m=1\)) and the wave equation (\(m=2\)) are treated separately. Theorem \ref{thm:direct} is then established, providing existence and regularity results under suitable assumptions on the source term. Finally, Theorem \ref{thm:strong} presents an integration by parts formula satisfied by the strong solution.

Throughout the remainder of the paper, we set \(H = L^2(D)\), equipped with the inner product $\langle u,v\rangle=\int_Du(x)v(x)dx$ and the induced norm \(\|\cdot\|=\|\cdot\|_{L^2(D)}\), and set \(A = -\Delta\) with homogeneous boundary conditions on $\partial D$. Then it holds $\dot{H}^2(D)=H^2(D)\cap H^1_0(D)$ and $\dot{H}^1(D)= H^1_0(D)$, where $H^1_0(D)$ and $H^2(D)$ denotes the standard Sobolev space. Moreover, for any $f\in\dot{H}^1(D)$, one has
$\|f\|_{\dot{H}^1(D)} = \|\nabla f\|$ (cf. \cite[Theorem 6.4]{LV03}),
which together with Poincar\'e's inequality implies the equivalence of the norms $\|\cdot\|_{\dot{H}^1(D)}$ and $\|\cdot\|_{H^1(D)}$.

\subsection{Stochastic heat equation}

We first consider the parabolic case with \(m=1\) in \eqref{eq:model} and study the corresponding direct problem.  Setting $B(x,t)=g(x)f(t)$, then \eqref{eq:model} can be rewritten into the following abstract form
\begin{equation}\label{eq:parabolic}
\left\{\begin{aligned}
du(\cdot,t) & = -Au(\cdot,t)dt+B(\cdot,t)dW(t), \quad t>0,\\
u(\cdot,0) & =0.
\end{aligned}\right.
\end{equation}

Given \( f \in L^2([0, T]) \) and \( g \in H \), it follows that \( B \in L^2([0, T]; H) \). Hence \eqref{eq:parabolic} admits an $H$-valued mild solution according to Remark \ref{rem:mild}. Since the strong solution takes values in \( Dom(A) = \dot{H}^2(D) \), higher regularity is required for the random source term. In what follows, we show the well-posedness of the strong solution under the assumption \( g \in \dot{H}^1(D) \), as stated in Theorem \ref{thm:direct}.

\begin{proof}[Proof of Theorem \ref{thm:direct} ($m=1$)]
Under the assumptions of $f\in L^2(D)$ and $g\in \dot{H}^1(D)$, we will prove that the mild solution is also a strong solution. 

We first show that the mild solution $u$ satisfies $u\in L^2(\Omega;L^2([0,T];\dot{H}^2(D)))$. By the smoothing effect of the analytic semigroup $S(t)$ (cf.\cite[Chapter 2, Theorem 6.13]{P83}), it holds for any $t>0$ that 
\[
S(t)g(\cdot) \in Dom(A).
\]
Interchanging of stochastic integral with the Laplacian operator (cf.\cite[Proposition 4.30]{DZ14}) and It\^{o}'s isometry yield that
\begin{align}
\mathbb{E}\left[\|u(\cdot,t)\|^2_{\dot{H}^2(D)}\right] &= \mathbb{E}\left[\left\|A\int_0^t S(t-s)f(s)g(\cdot)dW(s)\right\|^2\right] \notag\\
&= \mathbb{E}\left[\left\|\int_0^t AS(t-s)f(s)g(\cdot)dW(s)\right\|^2\right]\notag\\
&=\int_0^t \left\|AS(t-s)g(\cdot)\right\|^2f^2(s)ds \notag\\
&=\sum_{j=1}^{\infty}\lambda_j^2\langle g, \phi_j\rangle^2 \int_0^t e^{-2\lambda_j(t-s)}f^2(s)ds. \label{eq:2-1}
\end{align}
By Young's convolution inequality, we obtain that 
\[
 \int_0^t e^{-2\lambda_j(t-s)}f^2(s)ds \leq \int_0^t e^{-2\lambda_js} ds \|f\|^2 = \frac{1}{2\lambda_j}(1-e^{-2\lambda_jt}) \|f\|^2< \frac{1}{2\lambda_j} \|f\|^2.
\]
It follows from \eqref{eq:2-1} that
\begin{equation*}
\mathbb{E}\left[\int_0^T\|u(\cdot,t)\|^2_{\dot{H}^2(D)}dt\right] < \int_0^T\frac12  \|f\|^2\sum_{j=1}^{\infty}\lambda_j\langle g, \phi_j\rangle^2dt= \frac{T}{2}\|f\|^2\|g\|_{\dot{H}^1(D)}^2<\infty,
\end{equation*}
which verifies $u\in L^2(\Omega;L^2([0,T];\dot{H}^2(D)))$.

Next, we prove that the mild solution $u$ satisfies \eqref{solution:strong} in the definition of stochastic strong solution. By virtue of the stochastic Fubini theorem (cf.\cite[Theorem 4.18]{DZ14}) and the dominated convergence theorem, one obtains that

\begin{align*}
\int_{0}^{t}A u(\cdot,s)ds &= \int_0^t\int_0^sAS(s-\tau) g(\cdot)f(\tau)dW(\tau)ds = \int_0^t\int_\tau^tAS(s-\tau) g(\cdot)f(\tau)dsdW(\tau)\\
&= \int_0^t\int_\tau^t\sum_{j=0}^{\infty}e^{-\lambda_j(s-\tau)}\lambda_j\langle g,\phi_j\rangle\phi_j(\cdot)f(\tau)dsdW(\tau)\\
&= \int_0^t\sum_{j=0}^{\infty}\left[ \int_\tau^te^{-\lambda_j(s-\tau)}\lambda_jds\right]\langle g,\phi_j\rangle \phi_j(\cdot)f(\tau)dW(\tau)\\
&= \int_0^t\sum_{j=0}^{\infty} (1-e^{-\lambda_j(t-\tau)})\langle g,\phi_j\rangle \phi_j(\cdot) f(\tau)dW(\tau)\\
&=g(\cdot)\int_0^t f(\tau)dW(\tau)-\int_0^tS(t-\tau)g(\cdot)f(\tau)dW(\tau)\\
&=\int_0^t B(\cdot,\tau)dW(\tau)-u(\cdot,t),
\end{align*}
which coincides with \eqref{solution:strong}.

For the uniqueness, assume that $u_1$ and $u_2$ are both strong solutions of \eqref{eq:model}. 
Let $v = u_1 - u_2 \in L^2([0,T]; \dot{H}^2(D))$. It then holds
\begin{equation*}
v(x,t) = \int_0^t \Delta v(x,s)\, ds, \qquad (x,t)\in D\times(0,T]
\end{equation*}
with homogeneous initial and boundary conditions, which satisfies
\[
\|v(\cdot,t)\|^2
= \int_0^t \langle \Delta v(\cdot,s), v(\cdot,t)\rangle\, ds
= - \int_0^t \langle \nabla v(\cdot,s), \nabla v(\cdot,t)\rangle\, ds.
\]
Integrating over $t\in[0,T]$ and applying Fubini's theorem yield
\begin{align*}
\int_0^T \|v(\cdot,t)\|^2\, dt
&= -\int_0^T \int_0^t \langle \nabla v(\cdot,s), \nabla v(\cdot,t)\rangle\, ds\, dt \\
&= -\frac12 \int_0^T \int_0^T \langle \nabla v(\cdot,s), \nabla v(\cdot,t)\rangle\, ds\, dt \\
&= -\frac12 \left\|\int_0^T \nabla v(\cdot,t)\, dt \right\|^2.
\end{align*}
 Hence $v\equiv0$, which completes the proof.
\end{proof}

\subsection{Stochastic wave equation}

For the hyperbolic case with \(m=2\) in \eqref{eq:model}, we start with deriving the abstract form of \eqref{eq:model}.

Denoting $X(t):=[u(\cdot,t),\partial_tu(\cdot,t)]^\top$, $B(x,t):=g(x)f(t)$, and $\tilde{H}:=\dot{H}^1(D)\times L^2(D)$, it holds from \eqref{eq:model} that 
\begin{align*}
dX(t)&=
\begin{bmatrix}
du(\cdot,t) \\ d\partial_tu(\cdot,t)
\end{bmatrix}=
\begin{bmatrix}
\partial_tu(\cdot,t) dt \\
\Delta u(\cdot,t)dt+B(\cdot,t)dW(t)
\end{bmatrix} \\
&= 
\begin{bmatrix}
0 & I\\
\Delta & 0
\end{bmatrix}
\begin{bmatrix}
u(\cdot,t) \\
\partial_tu(\cdot,t)
\end{bmatrix}dt+
\begin{bmatrix}
0\\B(\cdot,t)
\end{bmatrix}dW(t)\\
&=
-\tilde{A}X(t)dt+\tilde{B}(t)dW(t),
\end{align*}
where 
\[
\tilde{A}:=\begin{bmatrix}
0 & -I\\
-\Delta & 0
\end{bmatrix}, \quad
\tilde{B}(t):=\begin{bmatrix}
0\\B(\cdot,t)
\end{bmatrix}.
\]
Setting
\[
Dom(\tilde{A}):=\left\{X=(X_1,X_2)^\top: \begin{bmatrix}X_2 \\ 
				\Delta X_1 \end{bmatrix}
\in \dot{H}^1(D)\times L^2(D)
\right\}= \dot{H}^{2}(D) \times \dot{H}^1(D),
\]
then the hyperbolic case with $m=2$ in \eqref{eq:model} can be rewritten as
\begin{equation}\label{eq:hyperbolic}
\left\{\begin{aligned}
dX(t) & = -\tilde{A}X(t)dt+\tilde{B}(t)dW(t), \quad t>0,\\
X(0) & =0.
\end{aligned}\right.
\end{equation}

It can be verified that $-\tilde{A}$ also generates a $C_0$-semigroup $\{\tilde{S}(t):=e^{-\tilde{A}t}\}_{t\geq0}$ on $\tilde{H}$. To see what $\tilde{S}(t)$ is, one can use cosine and sine operator functions, and the semigroup can be formally expressed as 
$$
\tilde{S}(t)=e^{-t \tilde{A}}=\left[\begin{array}{cc}
\cos \left(t (-\Delta)^{\frac12}\right) & (-\Delta)^{-\frac12} \sin \left(t (-\Delta)^{\frac12}\right) \\
-(-\Delta)^{\frac12} \sin \left(t (-\Delta)^{\frac12}\right) & \cos \left(t (-\Delta)^{\frac12}\right)
\end{array}\right].
$$
Hence the mild solution of \eqref{eq:hyperbolic} admits the form

$$
X(t)= \int_0^t \tilde{S}(t-s) \tilde{B}(s) dW(s) =\left[\begin{array}{c}
\int_0^t (-\Delta)^{-\frac12} \sin \left((t-s) (-\Delta)^{\frac12}\right)B(\cdot,s)dW(s) \\
\int_0^t \cos \left((t-s) (-\Delta)^{\frac12}\right)B(\cdot,s) dW(s)
\end{array}\right].
$$

Based on the above form of the mild solution to \eqref{eq:hyperbolic}, we next give the proof of Theorem \ref{thm:direct} for the hyperbolic case.

\begin{proof}[Proof of Theorem \ref{thm:direct} ($m=2$)]
We will prove that the mild solution is also a strong solution. We first show that $X\in L^2(\Omega;L^2([0,T]; Dom(\tilde{A})))$, which yields $u\in L^2(\Omega;L^2([0,T];\dot{H}^2(D)))$. It suffices to verify
\begin{align*}
\mathbb{E}\left[\int_0^T\left\|\int_0^t (-\Delta)^{-\frac12} \sin \left((t-s) (-\Delta)^{\frac12}\right)B(\cdot,s)dW(s)\right\|_{\dot{H}^2(D)}^2dt\right]
 <\infty,\\
\mathbb{E}\left[ \int_0^T\left\|\int_0^t \cos \left((t-s) (-\Delta)^{\frac12}\right)B(\cdot,s)dW(s)\right\|^2_{ \dot{H}^1(D)}dt \right]<\infty.
\end{align*}
By It\^{o}'s isometry and interchange of stochastic integral with closed operators \cite[Proposition 4.30]{DZ14}, it suffices to show
\begin{align}
\int_0^T\int_0^t \| (-\Delta)^{\frac12}\sin \left((t-s) (-\Delta)^{\frac12}\right)B(\cdot,s)\|^2dsdt &< \infty, \label{eq:1-1}\\
\int_0^T\int_0^t \|(-\Delta)^{\frac12} \cos \left((t-s) (-\Delta)^{\frac12}\right)B(\cdot,s)\|^2 dsdt &<\infty.\label{eq:1-2}
\end{align}
By Parseval's identity, we obtain that 
\begin{align*}
\int_0^t \| (-\Delta)^{\frac12}\sin \left((t-s) (-\Delta)^{\frac12}\right)B(\cdot,s)\|^2ds
=& \int_0^t\|\sum_{j=0}^\infty\lambda_j^{\frac{1}{2}}\sin \left((t-s) \lambda_j^{\frac12}\right) f(s)\langle g, \phi_j\rangle \phi_j\|^2ds \\
=& \int_0^t \sum_{j=1}^\infty \lambda_j \sin^2 \left((t-s) \lambda_j^{\frac12}\right) f^2(s) \langle g, \phi_j\rangle^2 ds\\
\leq&  \sum_{j=1}^\infty \lambda_j \langle g, \phi_j\rangle^2  \int_0^t f^2(s)ds\le\|f\|^2 \|g\|_{\dot{H}^1(D)}^2<\infty,
\end{align*}
which indicates \eqref{eq:1-1}.
The analysis of \eqref{eq:1-2} follows similarly and is omitted here. 

We then show that the mild solution $X$ satisfies \eqref{solution:strong} in the definition of stochastic strong solution. Based on the representations of semigroup $\tilde{S}(t)$ and mild solution $X(t)$, the stochastic Fubini theorem yields
\begin{align*}
\int_{0}^{t}\tilde{A} X(s)ds &= \int_0^t\int_0^s\tilde{A}\tilde{S}(s-\tau) \tilde{B}(\tau)dW(\tau)ds = \int_0^t\int_\tau^t\tilde{A}\tilde{S}(s-\tau) \tilde{B}(\tau)dsdW(\tau)\\
&= \int_0^t\int_\tau^t\left[\begin{array}{c}
 -f(\tau) \cos \left((s-\tau)(-\Delta)^{\frac12}\right) g(\cdot) \\
f(\tau) (-\Delta)^{\frac12} \sin \left((s-\tau) (-\Delta)^{\frac12}\right)g(\cdot)
\end{array}\right]dsdW(\tau)\\
&= \int_0^t\left[\begin{array}{c}
 -f(\tau) (-\Delta)^{-\frac12} \sin \left((t-\tau)(-\Delta)^{\frac12}\right) g(\cdot) \\
\left[-f(\tau) \cos \left((t-\tau)(-\Delta)^{\frac12}\right) +f(\tau)\right]g(\cdot)
\end{array}\right]dW(\tau)\\
&=-X(t)+\int_0^t \tilde{B}(\tau)dW(\tau),
\end{align*}
which coincides with \eqref{solution:strong}. The uniqueness follows similarly as in the heat equation case and is omitted here, thereby finishing the proof.
\end{proof}

\subsection{Property of stochastic strong solutions}

In this part, we give the property of stochastic strong solutions to \eqref{eq:model}, which indicates that the stochastic strong solution also satisfies a modified form of the stochastic weak solution. This property is essential for obtaining the uniqueness of the inverse problem in the subsequent section.

\begin{theorem}\label{thm:strong}
Let $\{u(\cdot,t)\}_{t\in[0,T]}$ be the stochastic strong solution of \eqref{eq:model} with $f\in L^2([0,T])$ and $g\in \dot{H}^1(D)$. 
\item[(i)] If $m=1$, for any $\zeta\in L^2([0,T];H^1_0(D))$ with $\partial_t \zeta \in L^2([0,T];H^{-1}(D))$, it holds for almost every $t\in[0,T]$ that
\begin{align}\label{prop:para_strong}
\langle u(\cdot,t), \zeta (\cdot,t)\rangle=&\int_0^t\langle\Delta u(\cdot,s),\zeta(\cdot,s)\rangle ds+\int_0^t\langle u(\cdot,s), \partial_s\zeta(\cdot,s)\rangle_{-1}^1ds \notag\\
&+\int_0^tf(s)\langle g(\cdot),\zeta(\cdot,s)\rangle dW(s),
\end{align}
where $H^{-1}(D)$ denotes the dual space of $H_0^1(D)$ and $\langle\cdot,\cdot\rangle_{-1}^1$ denotes the dual product between spaces $H^1_0(D)$ and $H^{-1}(D)$.

\item[(ii)] If $m=2$, for any $\zeta\in L^2([0,T];H^1_0(D))$ with $\partial_t \zeta \in L^2([0,T];L^2(D))$ and $\partial_{tt} \zeta \in L^2([0,T];H^{-1}(D))$, it holds  for almost every $t\in[0,T]$ that
\begin{align*}
\langle \partial_tu(\cdot,t), \zeta (\cdot,t)\rangle- \langle u(\cdot,t), \partial_t\zeta (\cdot,t)\rangle = \int_0^t\langle\Delta u(\cdot,s),\zeta(\cdot,s)\rangle ds
\\
-\int_0^t\langle u(\cdot,s), \partial_{ss} \zeta(\cdot,s)\rangle_{-1}^1 ds
+\int_0^tf(s)\langle g(\cdot),\zeta(\cdot,s)\rangle dW(s).
\end{align*}

\end{theorem}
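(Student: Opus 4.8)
\textbf{Proof proposal for Theorem \ref{thm:strong}.}

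The plan is to derive the weak identities directly from the strong-solution equation \eqref{solution:strong} by testing against $\zeta$ and integrating by parts in time, using the It\^o product rule as the main tool. First, for the heat case $(m=1)$, I would start from the strong formulation
\[
u(\cdot,t)=-\int_0^t A u(\cdot,s)\,ds+\int_0^t B(\cdot,s)\,dW(s),
\]
which exhibits $u(\cdot,t)$ as an It\^o process with drift $-Au(\cdot,t)=\Delta u(\cdot,t)\in L^2(D)$ (using $u\in L^2([0,T];\dot H^2(D))$ from Theorem \ref{thm:direct}) and diffusion $g(\cdot)f(t)\in H$. Since $\zeta$ is deterministic (not a semimartingale in $\omega$), the relevant tool is a deterministic-in-time integration by parts combined with the stochastic integral. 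Concretely, I would fix $v\in H^1_0(D)$, apply the (scalar) It\^o process identity to $t\mapsto\langle u(\cdot,t),v\rangle$, then handle the time dependence of $\zeta$ by writing $\zeta(\cdot,t)=\zeta(\cdot,0)+\int_0^t\partial_s\zeta(\cdot,s)\,ds$ and using the standard product rule for the product of an absolutely-continuous-in-time function with an It\^o process. This yields
\[
\langle u(\cdot,t),\zeta(\cdot,t)\rangle
=\int_0^t\langle\Delta u(\cdot,s),\zeta(\cdot,s)\rangle\,ds
+\int_0^t\langle u(\cdot,s),\partial_s\zeta(\cdot,s)\rangle\,ds
+\int_0^t f(s)\langle g(\cdot),\zeta(\cdot,s)\rangle\,dW(s),
\]
and the duality pairing $\langle\cdot,\cdot\rangle_{-1}^1$ appears because $\partial_s\zeta$ only lies in $H^{-1}(D)$ while $u(\cdot,s)\in\dot H^2(D)\subset H^1_0(D)$, so the pairing is legitimate.

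The rigorous route I would take is an approximation/density argument. Since $u\in L^2(\Omega;L^2([0,T];\dot H^2(D)))$ and $\Delta u\in L^2(\Omega;L^2([0,T];H))$, both $\langle\Delta u(\cdot,s),\zeta(\cdot,s)\rangle$ and $\langle u(\cdot,s),\partial_s\zeta(\cdot,s)\rangle_{-1}^1$ are integrable in $s$ $\mathbb P$-a.s., and the stochastic integral is well-defined because $s\mapsto f(s)\langle g,\zeta(\cdot,s)\rangle$ is in $L^2([0,T])$ $\mathbb P$-a.s. I would first prove the identity for smooth test functions $\zeta$ (say $\zeta\in C^1([0,T];H^1_0(D))$), where the product rule is elementary: expand using the spectral basis $\{\phi_j\}$, write $\langle u(\cdot,t),\zeta(\cdot,t)\rangle=\sum_j\langle u(\cdot,t),\phi_j\rangle\langle\zeta(\cdot,t),\phi_j\rangle$, apply scalar It\^o to each term (the $\phi_j$-coordinate of $u$ solves a scalar linear SDE), and sum. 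Then I would pass to the limit in $\zeta$ using the density of smooth functions in $\{\zeta\in L^2([0,T];H^1_0(D)):\partial_t\zeta\in L^2([0,T];H^{-1}(D))\}$ and the fact that such $\zeta$ has a continuous representative in $L^2(D)$ (the standard Lions--Magenes embedding), which controls $\langle u(\cdot,t),\zeta(\cdot,t)\rangle$ at fixed $t$; the It\^o-isometry bound controls the stochastic integral term in $L^2(\Omega)$, and the other two terms converge by Cauchy--Schwarz in $(s,\omega)$.

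For the wave case $(m=2)$, I would work with the system form \eqref{eq:hyperbolic}: $X=(u,\partial_t u)^\top$ is a $Dom(\tilde A)$-valued strong solution, so $\partial_t u(\cdot,t)=\int_0^t\Delta u(\cdot,s)\,ds+\int_0^t B(\cdot,s)\,dW(s)$ while $u(\cdot,t)=\int_0^t\partial_s u(\cdot,s)\,ds$ (no stochastic part in the first component). Testing the second component against $\zeta$ and the first against $\partial_t\zeta$, then combining, gives
\[
\langle\partial_t u(\cdot,t),\zeta(\cdot,t)\rangle-\langle u(\cdot,t),\partial_t\zeta(\cdot,t)\rangle,
\]
and a double integration by parts in $s$ moves two time derivatives from $u$ onto $\zeta$, producing the $-\int_0^t\langle u(\cdot,s),\partial_{ss}\zeta(\cdot,s)\rangle_{-1}^1\,ds$ term; the regularity hypotheses $\partial_t\zeta\in L^2([0,T];L^2(D))$ and $\partial_{tt}\zeta\in L^2([0,T];H^{-1}(D))$ are exactly what is needed for these pairings and for $\zeta$ to have continuous-in-time representatives valued in $H^1_0(D)$ and $L^2(D)$. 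The same smooth-approximation scheme then closes the argument.

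The main obstacle I anticipate is the low temporal regularity coming from the white noise: $u(\cdot,t)$ is not absolutely continuous in $t$, so ``integration by parts in time'' against $\zeta$ cannot be done pathwise in the naive way and must be interpreted via the It\^o product rule, which is why I insist on first establishing everything for smooth $\zeta$ by spectral decomposition into scalar SDEs and only then passing to the limit. A secondary technical point is justifying the interchange of the infinite sum over $j$ with the (stochastic and Lebesgue) integrals and with the limit in $\zeta$; this requires the uniform bounds already obtained in the proof of Theorem \ref{thm:direct}, namely $\sum_j\lambda_j\langle g,\phi_j\rangle^2=\|g\|_{\dot H^1(D)}^2<\infty$, together with dominated convergence, and in the wave case the analogous bound for the cosine/sine operator functions. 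I expect no new estimates beyond those, so the proof is mainly a careful bookkeeping of the It\^o product rule plus a density argument.
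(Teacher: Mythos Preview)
Your proposal is correct and follows essentially the same strategy as the paper: establish the identity first for smooth test functions via the It\^o product rule, then pass to the general $\zeta$ by density in the Lions--Magenes space $W([0,T])$, controlling the three right-hand terms exactly as you describe. The only cosmetic difference is that the paper handles the smooth case by taking \emph{separable} test functions $\zeta(x,t)=\eta(x)\phi(t)$ with $\eta\in H^1_0(D)$, $\phi\in C^1([0,T])$, and applies the scalar It\^o formula to $G(Y(t),t)=\phi(t)\langle u(\cdot,t),\eta\rangle$ directly (then uses that finite sums of such separable functions are dense in $C^1([0,T];H^1_0(D))$), whereas you propose a spectral decomposition $\sum_j\langle u,\phi_j\rangle\langle\zeta,\phi_j\rangle$ and mode-by-mode It\^o; both routes are equivalent and require the same estimates.
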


\begin{proof}
We only present the proof for the parabolic case (i) below, and the proof of the hyperbolic case (ii) follows similarly. 

We start with the special case $\zeta(x,t)=\eta(x)\phi(t)$, where $\eta\in H^1_0(D)$ and $\phi\in C^1([0,T])$. Denote by $G(x,t):=\phi(t)x$ and 
\[
Y(t):=\langle u(\cdot,t), \eta(\cdot)\rangle = \int_0^t \langle \Delta u(\cdot,s), \eta(\cdot)\rangle ds +\langle g,\eta\rangle \int_0^tf(s)dW(s)
\]
for any $t\in[0,T]$. 
Utilizing It\^{o}'s formula yields 
\begin{eqnarray*}
&&\langle u(\cdot,t),\zeta(\cdot,t)\rangle = G(Y(t),t) \\
&=& \int_0^t\left[\frac{\partial G}{\partial s}(Y(s),s)+\frac{\partial G}{\partial x}(Y(s),s)\langle \Delta u(\cdot,s),\eta(\cdot)\rangle\right] ds + \langle g,\eta \rangle\int_0^t\frac{\partial G}{\partial x}(Y(s),s) f(s)dW(s) \\
&=& \int_0^t\left[ \phi'(s)Y(s)+\phi(s)\langle \Delta u(\cdot,s),\eta(\cdot)\rangle \right] ds + \int_0^t\phi(s)\langle g,\eta \rangle f(s)dW(s)\\
&= &\int_0^t\langle u(\cdot,s), \partial_s \zeta(\cdot,s)\rangle ds+ \int_0^t\langle\Delta u(\cdot,s),\zeta(\cdot,s)\rangle ds+ \int_0^tf(s)\langle g(\cdot),\zeta(\cdot,s)\rangle dW(s),
\end{eqnarray*}
which indicates that \eqref{prop:para_strong} holds with $\langle u(\cdot,s), \partial_s \zeta(\cdot,s)\rangle_{-1}^1=\langle u(\cdot,s), \partial_s \zeta(\cdot,s)\rangle$ for $\zeta(x,s)=\eta(x)\phi(s)$ and $\phi\in C^1([0,T])$. 
Denote the space
\[
\mathcal A:=\text{Span} \{\eta\cdot\phi: \eta\in H^1_0(D), \phi\in C^1([0,T])\}.
\]
Then for any $\zeta\in \mathcal A$, \eqref{prop:para_strong} is also satisfied due to the linearity of the integrals. 

Next, for a general function
\[
\zeta \in W([0,T]):=\{\zeta : \zeta \in L^2([0,T];H_0^1(D)) , \partial_t \zeta \in L^2([0,T];H^{-1}(D))\},
\]
equipped with the norm
\[
\|\zeta\|_{W([0,T])} := \Big(\|\zeta\|^2_{L^2([0,T];H^1(D))}+ \|\partial_t \zeta\|^2_{L^2([0,T];H^{-1}(D))}\Big)^{1/2},
\]
we claim that the space \(\mathcal{A}\) is dense in \(W([0,T])\). That is, there exists a sequence \(\{\zeta_n\}_{n\in\mathbb N} \subset \mathcal{A}\) such that
\begin{align}\label{eq:claim}
\lim_{n\to\infty}\|\zeta_n-\zeta\|_{L^2([0,T];H^1(D))} = 0, \quad
\lim_{n\to\infty}\|\partial_t(\zeta_n-\zeta)\|_{L^2([0,T];H^{-1}(D))} = 0 .
\end{align}
In fact, by \cite[Lemma 2.11]{FK01}, the space \(\mathcal{A}\) is dense in \(C^1([0,T];H^1_0(D))\). On the other hand, it follows from \cite[Theorem 2.1]{LM72} that \(C_0^\infty([0,T];H^1_0(D))\) is dense in \(W([0,T])\), which implies that \(C^1([0,T];H^1_0(D))\) is dense in \(W([0,T])\). Since the embedding
\[
C^1([0,T];H^1_0(D)) \hookrightarrow W([0,T])
\]
is continuous, the claim follows.

Based on the claim above, one can find a proper subsequence $\{\zeta_{n_k}\}_{k\in \mathbb{N}}\subset\{\zeta_n\}_{n\in\mathbb{N}}$ such that 
\begin{align}\label{eq:claim2}
\lim_{k\to\infty}\|\zeta_{n_k}(\cdot,t)-\zeta(\cdot,t)\|_{H^1(D)}=0,\quad \lim_{k\to\infty}\|\partial_t(\zeta_n(\cdot,t)-\zeta(\cdot,t))\|_{H^{-1}(D)}=0
\end{align}
for almost every $t\in[0,T]$. 
Then for almost every $t\in[0,T]$, it holds
\begin{align*}
\langle u(\cdot,t), \zeta (\cdot,t)\rangle &=\lim_{k\rightarrow \infty} \langle u(\cdot,t), \zeta_{n_k} (\cdot,t)\rangle \\
&= \lim_{k\rightarrow \infty}\bigg[\int_0^t\langle\Delta u(\cdot,s),\zeta_{n_k}(\cdot,s)\rangle ds+\int_0^t\langle u(\cdot,s), \partial_s \zeta_{n_k}(\cdot,s)\rangle_{-1}^1 dt\\
&\quad +\int_0^tf(s)\langle g(\cdot),\zeta_{n_k}(\cdot,s)\rangle dW(s)\bigg]\\
&= \int_0^t\langle\Delta u(\cdot,s),\zeta(\cdot,s)\rangle ds+\int_0^t\langle u(\cdot,s), \partial_s \zeta(\cdot,s)\rangle_{-1}^1 ds
+\int_0^tf(s)\langle g(\cdot),\zeta(\cdot,s)\rangle dW(s),
\end{align*}
which verifies \eqref{prop:para_strong} and completes the proof. 
Here, the second equality holds due to the fact that \eqref{prop:para_strong} holds for each $\zeta_{n_k}\in\mathcal A$. The last equality holds, i.e.,
\begin{align*}
&\lim_{k\to\infty}\int_0^t\langle \Delta u(\cdot,s), \zeta_{n_k}(\cdot,s)\rangle ds =\int_0^t\langle \Delta u(\cdot,s), \zeta(\cdot,s)\rangle ds,  \\
&\lim_{k\to\infty}\int_0^t\langle u(\cdot,s), \partial_s \zeta_{n_k}(\cdot,s)\rangle_{-1}^1 ds =\int_0^t\langle u(\cdot,s), \partial_s \zeta(\cdot,s)\rangle_{-1}^1 ds,\\
&\lim_{k\to\infty}\int_0^t\langle g(\cdot), \zeta_{n_k}(\cdot,s)\rangle f(s) dW(s) =\int_0^t\langle g(\cdot), \zeta(\cdot,s)\rangle f(s) dW(s),
\end{align*}
because of the following estimates:
\begin{align*}
\mathbb E\left[\left|\int_0^t\langle \Delta u(\cdot,s), \zeta(\cdot,s)-\zeta_{n_k}(\cdot,s)\rangle ds\right|^2\right] 
&\le\mathbb E\left[\left( \int_0^t\|\Delta u(\cdot,s)\|_{L^2(D)}\|\zeta(\cdot,s)-\zeta_{n_k}(\cdot,s)\|_{L^2(D)}ds\right)^2\right]\\
&\le\|u\|_{L^2(\Omega;L^2([0,T];\dot H^2(D)))}^2\|\zeta-\zeta_{n_k}\|_{L^2([0,T];L^2(D))}^2 \rightarrow 0,
\end{align*}
\begin{align*}
\mathbb E\left[\left|\int_0^t\langle  u(\cdot,s), \partial_s(\zeta-\zeta_{n_k})(\cdot,s)\rangle_{-1}^1 ds\right|^2\right] 
&\le\mathbb E\left[\left(\int_0^t\| u(\cdot,s)\|_{ H^1(D)}\|\partial_s(\zeta-\zeta_{n_k})(\cdot,s)\|_{H^{-1}(D)}ds\right)^2\right]\\
&\le\|u\|_{L^2(\Omega;L^2([0,T];\dot H^1(D)))}^2\|\partial_t(\zeta-\zeta_{n_k})\|_{L^2([0,T];H^{-1}(D))}^2 \rightarrow 0,
\end{align*}
\begin{align*}
\mathbb{E}\left[\left|\int_0^t\langle g(\cdot), \zeta(\cdot,s)-\zeta_{n_k}(\cdot,s)\rangle f(s) dW(s)\right|^2\right]
&= \int_0^t\langle g(\cdot), \zeta(\cdot,s)-\zeta_{n_k}(\cdot,s)\rangle^2 f^2(s) ds\\
&\le \|g\|_{L^2(D)}^2\int_0^t  \|\zeta(\cdot,s)-\zeta_{n_k}(\cdot,s)\|_{L^2(D)}^2 f^2(s) ds \rightarrow 0
\end{align*}
as $k\to\infty$, where \eqref{eq:claim} and \eqref{eq:claim2} are utilized.
\end{proof}

For $m=1$ and any $\zeta \in L^2([0,T]; H^1_0(D))$ with $\partial_t \zeta \in L^2([0,T]; H^{-1}(D))$, it follows from \cite[Section 5.9.2, Theorem 3]{E98} that there exists a modification $\tilde{\zeta}$ of $\zeta$ such that $\tilde{\zeta} \in C([0,T]; L^2(D))$. For $m=2$ and any $\zeta \in L^2([0,T]; H^1_0(D))$ with $\partial_t \zeta \in L^2([0,T]; L^{2}(D))$ and $\partial_{tt} \zeta \in L^2([0,T]; H^{-1}(D))$, it follows from \cite[Section 5.9.2, Theorem 2]{E98} that there exists a modification $\tilde{\zeta}$ of $\zeta$ such that $\tilde{\zeta} \in C([0,T]; L^2(D))$ and $\partial_t\tilde\zeta\in C([0,T];H^{-1}(D))$.
In the subsequent analysis of the inverse problem, we will use a time-convolution form of Theorem~\ref{thm:strong} applied to the smooth modification $\tilde\zeta$, which is stated in the following corollary.

\begin{corollary}\label{cor:strong}
Let $\{u(\cdot,t)\}_{t\in[0,T]}$ be the stochastic strong solution of \eqref{eq:model} with $f\in L^2([0,T])$ and $g\in \dot{H}^1(D)$. 
\item[(i)] If $m=1$, for any $\zeta$ satisfies conditions in Theorem \ref{thm:strong} (i) with $\zeta \in C([0,T]; L^2(D))$, it holds for any $t\in[0,T]$ that
\begin{align*}
\langle u(\cdot,t), \zeta(\cdot,0)\rangle
&= \int_0^t \langle \Delta u(\cdot,s), \zeta(\cdot,t-s)\rangle ds-\int_0^t \langle u(\cdot,s), \partial_t \zeta(\cdot,t-s)\rangle_{-1}^1 ds \\
&\quad + \int_0^t f(s)\langle g(\cdot), \zeta(\cdot,t-s)\rangle dW(s).
\end{align*}

\item[(ii)]  If $m=2$, for any $\zeta$ satisfies conditions in Theorem \ref{thm:strong} (ii) with $\zeta \in C([0,T]; L^2(D))$ and $\partial_t\zeta \in C([0,T]; H^{-1}(D))$, it holds for any $t\in[0,T]$ that

\begin{align*}
\langle \partial_t u(\cdot,t), \zeta(\cdot,0)\rangle 
 - \langle u(\cdot,t), \partial_t \zeta(\cdot,0)\rangle
&= \int_0^t \langle \Delta u(\cdot,s), \zeta(\cdot,t-s)\rangle ds -\int_0^t \langle u(\cdot,s), \partial_{tt} \zeta(\cdot,t-s)\rangle_{-1}^1ds \\
&\quad + \int_0^t f(s)\langle g(\cdot), \zeta(\cdot,t-s)\rangle dW(s).
\end{align*}
\end{corollary}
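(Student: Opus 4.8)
The plan is to obtain Corollary~\ref{cor:strong} directly from Theorem~\ref{thm:strong} by a substitution trick: fix $t\in[0,T]$ and apply the identity of Theorem~\ref{thm:strong} not to $\zeta(\cdot,s)$ itself, but to the time-reversed, time-shifted test function $\psi(x,s):=\tilde\zeta(x,t-s)$ for $s\in[0,t]$, where $\tilde\zeta$ is the continuous modification guaranteed by the remark following Theorem~\ref{thm:strong}. First I would check that $\psi$ is an admissible test function in the sense of Theorem~\ref{thm:strong}(i) (resp.\ (ii)): since $s\mapsto t-s$ is an affine bijection of $[0,t]$, the map $\psi$ inherits the regularity $\psi\in L^2([0,T];H^1_0(D))$, $\partial_s\psi\in L^2([0,T];H^{-1}(D))$ (resp.\ the second-order statement), with $\partial_s\psi(x,s)=-\partial_t\tilde\zeta(x,t-s)$ and, in case (ii), $\partial_{ss}\psi(x,s)=\partial_{tt}\tilde\zeta(x,t-s)$.

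Next I would substitute $\psi$ into \eqref{prop:para_strong} evaluated at the endpoint $t$. The left-hand side becomes $\langle u(\cdot,t),\psi(\cdot,t)\rangle=\langle u(\cdot,t),\tilde\zeta(\cdot,0)\rangle$, which is exactly the left side claimed in the corollary (and this is where continuity of the modification is used: we need the pointwise value $\tilde\zeta(\cdot,0)$ and $\tilde\zeta(\cdot,t-s)$ to make sense). On the right-hand side, the three integrals $\int_0^t\langle\Delta u(\cdot,s),\psi(\cdot,s)\rangle\,ds$, $\int_0^t\langle u(\cdot,s),\partial_s\psi(\cdot,s)\rangle_{-1}^1\,ds$, and $\int_0^t f(s)\langle g(\cdot),\psi(\cdot,s)\rangle\,dW(s)$ turn, upon inserting $\psi(\cdot,s)=\tilde\zeta(\cdot,t-s)$ and $\partial_s\psi(\cdot,s)=-\partial_t\tilde\zeta(\cdot,t-s)$, into precisely the convolution-type integrals in Corollary~\ref{cor:strong}(i); the sign flip in the $\partial_s\psi$ term produces the minus sign in front of $\int_0^t\langle u(\cdot,s),\partial_t\zeta(\cdot,t-s)\rangle_{-1}^1\,ds$. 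The hyperbolic case (ii) is handled the same way, with the extra term $-\langle u(\cdot,t),\partial_t\zeta(\cdot,0)\rangle$ on the left coming from $-\langle u(\cdot,t),\partial_t\tilde\zeta(\cdot,0)\rangle = \langle u(\cdot,t),\partial_s\psi(\cdot,t)\rangle$, and with $\partial_{ss}\psi(\cdot,s)=\partial_{tt}\tilde\zeta(\cdot,t-s)$ feeding the second-derivative integral.

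Finally, two points require a little care. Theorem~\ref{thm:strong} gives the identity only for \emph{almost every} $t\in[0,T]$, whereas the corollary asserts it for \emph{every} $t$; I would upgrade this by a continuity-in-$t$ argument, noting that once $\zeta$ is replaced by its continuous modification $\tilde\zeta$, both sides of the corollary's identity are continuous (indeed a.s.\ continuous) functions of $t$ --- the deterministic integrals depend continuously on $t$ via the continuity of $\tilde\zeta(\cdot,\cdot)$ and dominated convergence, and the stochastic integral $t\mapsto\int_0^t f(s)\langle g,\tilde\zeta(\cdot,t-s)\rangle\,dW(s)$ has a continuous modification by standard martingale estimates --- so an identity holding on a dense set of full measure extends to all $t$. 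The main obstacle, and the only genuinely delicate point, is the joint measurability and regularity bookkeeping needed to justify that $\psi(x,s)=\tilde\zeta(x,t-s)$ is itself a legitimate element of the test-function class for each fixed $t$, and that the stochastic integral against the $t$-dependent integrand $\langle g,\tilde\zeta(\cdot,t-s)\rangle f(s)$ is well-defined and admits a continuous version; everything else is a mechanical change of variables inside the already-established identity of Theorem~\ref{thm:strong}.
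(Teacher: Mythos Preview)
Your core idea---fix $t$ and apply Theorem~\ref{thm:strong} to the time-reversed test function $\psi(\cdot,s)=\zeta(\cdot,t-s)$---is exactly what the paper does (it calls the reversed function $\eta$). The substitution and the sign bookkeeping are correct.

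The one genuine gap is in your upgrade from ``almost every'' to ``every''. You assert that ``both sides of the corollary's identity are continuous functions of $t$'' and conclude that an identity holding on a full-measure set extends to all $t$. But you have not established that the corollary's identity holds for almost every $t$ in the first place. For each fixed $t$, Theorem~\ref{thm:strong} applied to $\psi_t(\cdot,s)=\zeta(\cdot,t-s)$ gives \eqref{prop:para_strong} only for a.e.\ value of the \emph{evaluation variable} (call it $\tau$), and the exceptional null set $N_t\subset[0,T]$ depends on the test function $\psi_t$, hence on $t$. You need $\tau=t$, i.e.\ $t\notin N_t$; there is no Fubini-type reason this should hold for a.e.\ $t$, since the null sets move with $t$. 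Your continuity claim about $t\mapsto\int_0^t f(s)\langle g,\zeta(\cdot,t-s)\rangle\,dW(s)$ concerns the diagonal $\tau=t$, where both the integrand and the upper limit vary together; even if this stochastic convolution has a continuous version, that does not help without first knowing the identity on a full-measure set of $t$'s.

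The easiest repair is to run the continuity argument in the right variable: fix $t$ (hence fix $\psi_t$) and show that both sides of \eqref{prop:para_strong} with test function $\psi_t$ are continuous in the evaluation variable $\tau$; then the a.e.-$\tau$ identity supplied by Theorem~\ref{thm:strong} extends to all $\tau$, in particular $\tau=t$. Here the stochastic term is simply $\tau\mapsto\int_0^\tau f(s)\langle g,\zeta(\cdot,t-s)\rangle\,dW(s)$ with a \emph{fixed} integrand, whose pathwise continuity is standard. The paper takes a slightly different route to the same upgrade: rather than a continuity argument, it goes back into the approximation step of Theorem~\ref{thm:strong} and observes that, under the extra hypothesis $\zeta\in C([0,T];L^2(D))$, the pointwise-in-time convergence of the approximating sequence (needed only for the left-hand side $\langle u(\cdot,\tau),\eta(\cdot,\tau)\rangle$) can be secured at the specific $\tau=t$ in question, so \eqref{prop:para_strong} is available at $\tau=t$ directly.
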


\begin{proof}
We only prove the parabolic case (i) with $m=1$. For any fixed $t\in[0,T]$, define 
$\eta(\cdot,s) := \zeta(\cdot,t-s)$.  
Then $\eta$ satisfies the conditions in Theorem \ref{thm:strong} (i) and in addition $\eta\in C([0,T];L^2(D))$. 
Hence, \eqref{prop:para_strong} can be extended to any $t\in[0,T]$, since the convergence of  the subsequence $\{\zeta_{n_k}\}_{k\in \mathbb{N}}\subset\{\zeta_n\}_{n\in\mathbb{N}}$ in \eqref{eq:claim2} is valid for any $t\in[0,T]$ under the addition assumption that $\zeta\in C([0,T];L^2(D))$. 
As a result, for any $t\in[0,T]$, it holds
\begin{align*}
\langle u(\cdot,t), \eta(\cdot,t)\rangle
&=\int_0^t \langle \Delta u(\cdot,s), \eta(\cdot,s)\rangle ds+\int_0^t \langle u(\cdot,s), \partial_s \eta(\cdot,s)\rangle_{-1}^1ds\\
&\quad + \int_0^t f(s)\langle g(\cdot), \eta(\cdot,s)\rangle dW(s),
\end{align*}
which concludes the proof due to the facts $\eta(\cdot,t)=\zeta(\cdot,0)$ and $\partial_s \eta(\cdot,s)=-\partial_t\zeta(\cdot,t-s)$.
\end{proof}

\section{Uniqueness for the inverse problem}
This section is devoted to the uniqueness for recovering the strength of $f$ involved in the random source. We first introduce several notations and some preliminary results. Then by establishing the relationship between the 
boundary flux $\left.\frac{\partial u}{\partial \overrightarrow{\mathbf{n}}}\right|_{\partial D}$ and the unknown strength $|f|$, we give the proof of Theorem \ref{thm:inverse}, which shows the uniqueness of the inverse problem for determining the strength $|f|$ in \eqref{eq:model}.

\subsection{The basis $\{\varphi_l\}_{l\in\mathbb N^+}$ of $L^2(\partial D)$}
For the eigensystem $\left\{\lambda_n, \phi_n\right\}_{n\in\mathbb N^+}$ of the operator $-\Delta$ with Dirichlet boundary condition, the trace theorem yields that $\left\{\left.\frac{\partial \phi_n}{\partial \overrightarrow{\mathbf{n}}}\right|_{\partial D}\right\}_{n\in\mathbb N^+} \subset H^{\frac12}(\partial D)$. For $\phi,\psi \in L^2(\partial D)$, their inner product is defined as 
$$
\langle \phi,\psi \rangle_{L^2(\partial D)}:=\int_{\partial D} \phi(x)\psi(x)dS(x)
$$
with the induced norm $\|\cdot\|^2_{L^2(\partial D)}=\langle\cdot,\cdot\rangle_{L^2(\partial D)}$. The following lemmas concern the non-vanishing property of $\left\{\left.\frac{\partial \phi_n}{\partial \overrightarrow{\mathbf{n}}}\right|_{\partial D}\right\}_{n\in\mathbb N^+}$ and the density of its linear span.

\begin{lemma}(\cite[Lemma 2.1]{LZZ22})\label{lemma_vanish}
If $\Gamma$ is a nonempty open subset of $\partial D$, then for each $n \in \mathbb{N}^{+}, \frac{\partial \phi_n}{\partial \overrightarrow{\mathbf{n}}}$ can not vanish almost everywhere on $\Gamma$.
\end{lemma}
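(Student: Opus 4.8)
The plan is to argue by contradiction via the unique continuation principle for second order elliptic equations. Suppose that for some $n\in\mathbb{N}^{+}$ one has $\frac{\partial\phi_n}{\partial\overrightarrow{\mathbf{n}}}=0$ almost everywhere on $\Gamma$. Since $\phi_n\in H^1_0(D)=\dot H^1(D)$, its trace on $\partial D$ vanishes, so in particular $\phi_n|_\Gamma=0$; thus both the Dirichlet and the Neumann data of $\phi_n$ vanish on the nonempty open piece $\Gamma$. The aim is to show that a solution of $-\Delta\phi_n=\lambda_n\phi_n$ in $D$ whose Cauchy data vanish on $\Gamma$ must vanish identically, which is incompatible with the normalization $\|\phi_n\|_{L^2(D)}=1$.

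To make this rigorous I would first localize. Pick $x_0\in\Gamma$ and a ball $B=B(x_0,r)$ small enough that $B\cap\partial D\subset\Gamma$, and define the zero extension $\widetilde\phi_n:=\phi_n$ on $B\cap D$ and $\widetilde\phi_n:=0$ on $B\setminus\overline D$. Because $\phi_n|_\Gamma=0$ the glued function $\widetilde\phi_n$ lies in $H^1(B)$, and because $\frac{\partial\phi_n}{\partial\overrightarrow{\mathbf{n}}}=0$ on $\Gamma$ the integration by parts $\int_{B\cap D}(\nabla\phi_n\cdot\nabla\psi-\lambda_n\phi_n\psi)=\int_{B\cap\partial D}\frac{\partial\phi_n}{\partial\overrightarrow{\mathbf{n}}}\psi$ produces no boundary contribution for $\psi\in C_c^\infty(B)$ (the rest of $\partial(B\cap D)$ lies in the interior of $B$ where $\psi$ is supported away from). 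Hence $\widetilde\phi_n$ is a weak solution of $(-\Delta-c)\widetilde\phi_n=0$ in $B$ with the bounded coefficient $c:=\lambda_n\mathbf{1}_{B\cap D}\in L^\infty(B)$; by interior elliptic regularity $\widetilde\phi_n\in H^2_{loc}(B)$, and it vanishes on the nonempty open set $B\setminus\overline D$.

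Now I invoke the (strong) unique continuation principle: a local solution of $(-\Delta-c)v=0$ with $c\in L^\infty$ that vanishes on an open subset of a connected open set vanishes identically there; when the relevant portion of $\partial D$ is analytic one may instead use that solutions of $-\Delta v=\lambda_n v$ are real-analytic together with Holmgren's theorem (every hypersurface being non-characteristic for $-\Delta$). This forces $\widetilde\phi_n\equiv0$ on $B$, hence $\phi_n\equiv0$ on the nonempty open set $B\cap D\subset D$. Applying the unique continuation principle once more inside the connected domain $D$ to $(-\Delta-\lambda_n)\phi_n=0$ propagates the vanishing from $B\cap D$ to all of $D$, contradicting $\|\phi_n\|_{L^2(D)}=1$. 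The main technical obstacle is the low (Lipschitz) regularity of $\partial D$: one must check carefully that the zero extension genuinely lies in $H^1$ and solves the equation across $\Gamma$ in the weak sense, or else reduce to a smooth sub-piece of $\Gamma$ (by flattening the boundary, which turns $-\Delta$ into a divergence-form operator with Lipschitz leading coefficients, still covered by the unique continuation theory in dimension $d\in\{1,2,3\}$); once the problem has been reduced to an interior unique continuation statement, the remaining steps are standard.
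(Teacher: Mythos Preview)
The paper does not supply its own proof of this lemma; it is quoted directly from \cite[Lemma~2.1]{LZZ22} and used as a black box. Your argument by contradiction via the unique continuation principle is the standard route and is essentially what underlies the proof in \cite{LZZ22}: vanishing Cauchy data on $\Gamma$ allow a zero extension across the boundary, which is then a weak solution of a Schr\"odinger-type equation with bounded potential, and unique continuation propagates the zero from the exterior back into $D$.

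One small wording slip: when you dispose of the boundary terms in Green's identity you say the remaining part of $\partial(B\cap D)$ ``lies in the interior of $B$''; in fact the remainder lies on $\partial B$, and the reason it contributes nothing is that $\psi\in C_c^\infty(B)$ vanishes there. The logic is unaffected. Your cautionary note about the Lipschitz regularity of $\partial D$ is well taken: the cleanest way to handle it is exactly what you outline---flatten the boundary locally, which turns $-\Delta$ into a divergence-form operator with Lipschitz leading coefficients, and invoke the Aronszajn--Cordes type unique continuation theorem valid in that setting.
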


\begin{lemma}(\cite[Lemma 2.2]{LZZ22})
The set $\operatorname{Span}\left\{\left.\frac{\partial \phi_n}{\partial \overrightarrow{\mathbf{n}}}\right|_{\partial D}:n\in\mathbb N^+\right\}$ is dense in $L^2( \partial D)$.
\end{lemma}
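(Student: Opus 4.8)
The plan is the standard orthogonality argument in the Hilbert space $L^2(\partial D)$: a subset has dense span if and only if its orthogonal complement is $\{0\}$, so it suffices to show that any $h\in L^2(\partial D)$ satisfying
\[
\left\langle h,\ \left.\frac{\partial\phi_n}{\partial\overrightarrow{\mathbf{n}}}\right|_{\partial D}\right\rangle_{L^2(\partial D)}=0\qquad\text{for all }n\in\mathbb N^+
\]
must vanish identically on $\partial D$.

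First I would attach to $h$ its harmonic extension. Let $v$ solve the Dirichlet problem $\Delta v=0$ in $D$, $v=h$ on $\partial D$, interpreted in the transposition (\emph{very weak}) sense, so that $v$ is characterized by
\[
\langle v,\psi\rangle_{L^2(D)}=-\left\langle h,\ \frac{\partial w_\psi}{\partial\overrightarrow{\mathbf{n}}}\right\rangle_{L^2(\partial D)}\qquad\text{for every }\psi\in L^2(D),
\]
where $w_\psi\in\dot{H}^2(D)$ is the weak solution of $-\Delta w_\psi=\psi$ in $D$, $w_\psi=0$ on $\partial D$. This identity is exactly Green's second formula for the pair $(v,w_\psi)$, using $\Delta v=0$ and $w_\psi|_{\partial D}=0$, and the right-hand side is a bounded linear functional of $\psi$ because $\psi\mapsto \partial w_\psi/\partial\overrightarrow{\mathbf{n}}$ maps $L^2(D)$ boundedly into $L^2(\partial D)$ by elliptic regularity together with the trace theorem, exactly as already invoked to place $\partial\phi_n/\partial\overrightarrow{\mathbf{n}}$ in $H^{1/2}(\partial D)$.

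Next, specialize the transposition identity to $\psi=\phi_n$. Since $-\Delta w_{\phi_n}=\phi_n$ forces $w_{\phi_n}=\lambda_n^{-1}\phi_n$, we obtain
\[
\langle v,\phi_n\rangle_{L^2(D)}=-\left\langle h,\ \frac{\partial w_{\phi_n}}{\partial\overrightarrow{\mathbf{n}}}\right\rangle_{L^2(\partial D)}=-\frac{1}{\lambda_n}\left\langle h,\ \frac{\partial\phi_n}{\partial\overrightarrow{\mathbf{n}}}\right\rangle_{L^2(\partial D)}=0
\]
for every $n\in\mathbb N^+$. Because $\{\phi_n\}_{n\in\mathbb N^+}$ is an orthonormal basis of $L^2(D)$, this yields $v\equiv 0$ in $D$, and hence $h=v|_{\partial D}\equiv 0$ by injectivity of the boundary-to-solution map. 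Therefore the orthogonal complement is trivial and the span is dense.

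The point requiring the most care is that $D$ is only Lipschitz: for general $h\in L^2(\partial D)$ the extension $v$ need not belong to $H^1(D)$, so Green's identity and the equality $v|_{\partial D}=h$ must be handled in the very weak / nontangential-limit framework. I would first establish the displayed chain of equalities for $h\in C^\infty(\partial D)$, where $v$ is smooth up to $\partial D$ and every step is classical, and then pass to $h\in L^2(\partial D)$ using density of $C^\infty(\partial D)$ in $L^2(\partial D)$ and the boundedness (uniform on bounded sets of $h$) of the functionals $\psi\mapsto\langle v,\psi\rangle_{L^2(D)}$; the identity $\langle v,\phi_n\rangle_{L^2(D)}=-\lambda_n^{-1}\langle h,\partial\phi_n/\partial\overrightarrow{\mathbf{n}}\rangle_{L^2(\partial D)}$ survives this limit, which is all that the conclusion needs.
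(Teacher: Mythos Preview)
The paper does not prove this lemma; it is cited from \cite{LZZ22} without argument, so there is no in-paper proof to compare against. Your orthogonality argument via the harmonic extension of $h$ is the standard route and is correct in outline.

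One point should be made sharper. If $v$ is \emph{defined} by the transposition identity $\langle v,\psi\rangle=-\langle h,\partial w_\psi/\partial\overrightarrow{\mathbf{n}}\rangle_{L^2(\partial D)}$, then the implication ``$v=0\Rightarrow h=0$'' is exactly the statement that $\{\partial w_\psi/\partial\overrightarrow{\mathbf{n}}:\psi\in L^2(D)\}$ is dense in $L^2(\partial D)$; since $w_{\phi_n}=\lambda_n^{-1}\phi_n$ and the $\phi_n$ span $\dot{H}^2(D)$, that density is precisely the lemma being proved. The approximation by smooth $h$ in your last paragraph establishes the identity $\langle v,\phi_n\rangle=-\lambda_n^{-1}\langle h,\partial\phi_n/\partial\overrightarrow{\mathbf{n}}\rangle_{L^2(\partial D)}$ but does not by itself break this circle. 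What breaks it is the independent input you allude to under ``nontangential-limit framework'': construct $v$ as the solution of the $L^2$ Dirichlet problem on the Lipschitz domain (Dahlberg), so that $v$ has nontangential boundary values $h$ a.e.; then $v\equiv0$ in $D$ forces $h=0$ directly from the boundary convergence, not from the transposition duality. Saying explicitly that the injectivity is supplied by this external result, and checking that the Dahlberg solution coincides with the transposition solution, would remove any appearance of circularity.
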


From the above lemmas, we are able to construct the orthonormal basis $\{\varphi_l\}_{l\in\mathbb N^+}$ in $L^2( \partial D)$. Firstly, we set $\varphi_1=\left.\frac{\partial \phi_1}{\partial \overrightarrow{\mathbf{n}}}\right|_{\partial D} /\left\|\frac{\partial \phi_1}{\partial \overrightarrow{\mathbf{n}}}\right\|_{L^2( \partial D)}$, and assume that the orthonormal set $\{\varphi_j\}_{j=1}^{l-1}$ has been built for some $l\ge2$. To construct $\varphi_l$, we choose the smallest number, denoted by $n_l \in \mathbb{N}^{+}$, such that 
\[
\left.\frac{\partial \phi_{n_l}}{\partial \overrightarrow{\mathbf{n}}}\right|_{\partial D} \notin \operatorname{Span}\{\varphi_1,\ldots,\varphi_{l-1}\}, 
\]
and pick $\varphi_l \in \operatorname{Span}\left\{\left.\frac{\partial \phi_{n_l}}{\partial \overrightarrow{\mathbf{n}}}\right|_{\partial D}, \varphi_1, \ldots, \varphi_{l-1}\right\}$ satisfying $\|\varphi_l\|_{L^2(\partial D)}=1$ and
$$
\langle\varphi_l, \varphi_j\rangle_{L^2( \partial D)}=0,\quad\forall~ j=1, \ldots, l-1.
$$
The density of the set $\operatorname{Span}\left\{\left.\frac{\partial \phi_n}{\n}\right|_{\partial D}:n\in\mathbb N^+\right\}$ in $L^2( \partial D)$ yields that $\{\varphi_l\}_{l\in\mathbb N^+}$ is an orthonormal basis in $L^2( \partial D)$. Also, functions in the basis $\{\varphi_l\}_{l\in\mathbb N^+}$ has the same regularity as those in $\left\{\left.\frac{\partial \phi_n}{\partial \overrightarrow{\mathbf{n}}}\right|_{\partial D}\right\}_{n\in\mathbb N^+}$, i.e., $\{\varphi_l\}_{l\in\mathbb N^+}\subset H^{\frac12}(\partial D)$.

\subsection{The coefficients $\{c_{z,n}\}_{n\in\mathbb N^+}$}
For each $l \in \mathbb{N}^{+}$, denote by $\xi_l \in H^1(D)$ the weak solution of the system:
\begin{equation*}
\left\{\begin{aligned}
-\Delta \xi_l(x) & =0, & & x \in D, \\
\xi_l(x) & =\varphi_l(x), & & x \in \partial D .
\end{aligned}\right.
\end{equation*}
For $z\in\partial D$, we define 
\begin{equation}\label{psi_zN}
\psi_z^N(x):=\sum_{l=1}^N \varphi_l(z) \xi_l(x),\ x\in D
\end{equation}
and the auxiliary coefficients
\[
c_{z,n}^N:=\left\langle\psi_z^N, \phi_n\right\rangle,\quad n\in\mathbb N^+.
\]

The following lemma shows that the limit of $c_{z,n}^N$ exists as $N\rightarrow \infty$, with the limit being related to the normal derivative $\frac{\partial \phi_n}{\partial \overrightarrow{\mathbf{n}}}$ at $z$.

\begin{lemma}\label{Lemma:cz}
The limit of the coefficients $c_{z,n}^N$ exists as $N\to\infty$ in the sense of $L^2(\partial D)$. Moreover, denoting the limit by $c_{z,n}:=\lim_{N\to\infty}c_{z,n}^N$, it holds
\[
c_{z,n}=-\frac{1}{\lambda_n}\frac{\partial \phi_n}{\partial \overrightarrow{\mathbf{n}}}(z).
\]
\end{lemma}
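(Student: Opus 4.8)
The plan is to compute $c_{z,n}^N = \langle \psi_z^N, \phi_n\rangle$ explicitly using Green's identity, and then pass to the limit. First I would fix $n\in\mathbb N^+$ and $z\in\partial D$, and analyze the single term $\langle \xi_l, \phi_n\rangle$. Since $-\Delta\phi_n = \lambda_n\phi_n$ with $\phi_n|_{\partial D}=0$, and $-\Delta\xi_l = 0$ with $\xi_l|_{\partial D}=\varphi_l$, Green's second identity gives
\begin{align*}
\int_D \big(\xi_l(-\Delta\phi_n) - \phi_n(-\Delta\xi_l)\big)\,dx
= \int_{\partial D}\Big(\phi_n\frac{\partial\xi_l}{\partial\overrightarrow{\mathbf n}} - \xi_l\frac{\partial\phi_n}{\partial\overrightarrow{\mathbf n}}\Big)\,dS.
\end{align*}
The left-hand side equals $\lambda_n\langle\xi_l,\phi_n\rangle$, while on the right-hand side the first boundary term vanishes because $\phi_n|_{\partial D}=0$ and the second equals $\int_{\partial D}\varphi_l\,\frac{\partial\phi_n}{\partial\overrightarrow{\mathbf n}}\,dS = \big\langle \varphi_l,\ \tfrac{\partial\phi_n}{\partial\overrightarrow{\mathbf n}}\big|_{\partial D}\big\rangle_{L^2(\partial D)}$. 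Hence $\langle\xi_l,\phi_n\rangle = -\frac{1}{\lambda_n}\big\langle\varphi_l,\ \tfrac{\partial\phi_n}{\partial\overrightarrow{\mathbf n}}\big|_{\partial D}\big\rangle_{L^2(\partial D)}$. I should note here that this Green's identity requires a little care since $\xi_l$ is only an $H^1(D)$ weak solution on a Lipschitz domain, so I would justify it via the weak formulation (using $\phi_n$ as a test function for $\xi_l$ and vice versa, exploiting $\phi_n\in\dot H^2(D)$) rather than classical integration by parts.

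Next I would substitute into the definition of $c_{z,n}^N$:
\begin{align*}
c_{z,n}^N = \sum_{l=1}^N \varphi_l(z)\,\langle\xi_l,\phi_n\rangle
= -\frac{1}{\lambda_n}\sum_{l=1}^N \varphi_l(z)\,\Big\langle\varphi_l,\ \tfrac{\partial\phi_n}{\partial\overrightarrow{\mathbf n}}\big|_{\partial D}\Big\rangle_{L^2(\partial D)}.
\end{align*}
Since $\{\varphi_l\}_{l\in\mathbb N^+}$ is an orthonormal basis of $L^2(\partial D)$ and $\frac{\partial\phi_n}{\partial\overrightarrow{\mathbf n}}\big|_{\partial D}\in H^{1/2}(\partial D)\subset L^2(\partial D)$, the partial sums $\sum_{l=1}^N \big\langle\varphi_l,\ \tfrac{\partial\phi_n}{\partial\overrightarrow{\mathbf n}}\big|_{\partial D}\big\rangle_{L^2(\partial D)}\,\varphi_l$ converge in $L^2(\partial D)$ to $\frac{\partial\phi_n}{\partial\overrightarrow{\mathbf n}}\big|_{\partial D}$ as $N\to\infty$. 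Reading $c_{z,n}^N$ as $-\frac1{\lambda_n}$ times the value at $z$ of this partial Fourier sum — i.e. identifying $c_{z,\cdot}^N$ as an element of $L^2(\partial D)$ in the variable $z$ — the $L^2(\partial D)$-convergence is immediate, and the limit is $c_{z,n} = -\frac1{\lambda_n}\frac{\partial\phi_n}{\partial\overrightarrow{\mathbf n}}(z)$, as claimed.

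The main obstacle is purely a matter of interpretation and rigor rather than deep content: one must be careful about what "$c_{z,n}^N$ converges in $L^2(\partial D)$" means, since for each fixed $z$ the quantity $c_{z,n}^N$ is just a number, and pointwise values of $H^{1/2}(\partial D)$ functions are not literally defined. The right reading is that $z\mapsto c_{z,n}^N$ is the function $-\frac1{\lambda_n}\sum_{l=1}^N\big\langle\varphi_l,\tfrac{\partial\phi_n}{\partial\overrightarrow{\mathbf n}}\big|_{\partial D}\big\rangle_{L^2(\partial D)}\varphi_l \in L^2(\partial D)$, and the statement is convergence of this sequence of functions; this matches the way $\psi_z^N$ in \eqref{psi_zN} is built from the basis $\{\varphi_l\}$. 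A secondary technical point is the Green's-identity step on a Lipschitz domain with an only-$H^1$ harmonic extension $\xi_l$; I would handle this by invoking the weak formulations of both boundary value problems together with the trace theorem, which is exactly where the regularity $\{\varphi_l\}\subset H^{1/2}(\partial D)$ and $\phi_n\in\dot H^2(D)$ are used. Once these interpretational issues are settled, the proof is a short computation.
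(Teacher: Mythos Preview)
Your proposal is correct and follows essentially the same route as the paper: Green's identity reduces $c_{z,n}^N$ to $-\tfrac{1}{\lambda_n}\sum_{l=1}^N\varphi_l(z)\big\langle\varphi_l,\tfrac{\partial\phi_n}{\partial\overrightarrow{\mathbf n}}\big\rangle_{L^2(\partial D)}$, and the result follows from the $L^2(\partial D)$-Fourier expansion of $\tfrac{\partial\phi_n}{\partial\overrightarrow{\mathbf n}}$. The only cosmetic difference is that the paper applies Green's theorem directly to $\psi_z^N$ rather than term-by-term to each $\xi_l$; your extra remarks on justifying Green's identity in the weak setting and on interpreting the $L^2(\partial D)$ limit in the variable $z$ are sound refinements the paper leaves implicit.
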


\begin{proof}
From Green's theorem, we have	
\begin{align}\label{eq:cznN}
c_{z,n}^N&=\left\langle\psi_z^N, \phi_n\right\rangle= \frac{1}{\lambda_n}\langle \psi_z^N, -\Delta \phi_n\rangle\notag\\
&= \frac{1}{\lambda_n}\left(\langle-\Delta\psi_z^N, \phi_n\rangle+\left\langle \frac{\partial \psi_z^N}{\partial \overrightarrow{\mathbf{n}}}, \phi_n\right\rangle_{L^2(\partial D)} - \left\langle \psi_z^N, \frac{\partial \phi_n}{\partial \overrightarrow{\mathbf{n}}}\right\rangle_{L^2(\partial D)}\right)\notag\\
&= -\frac{1}{\lambda_n}\int_{\partial D}\sum_{l=1}^{N}\varphi_l(z)\xi_l(x)\frac{\partial \phi_n}{\partial \overrightarrow{\mathbf{n}}}(x) dS(x)\notag\\
&= -\frac{1}{\lambda_n}\sum_{l=1}^{N}\varphi_l(z)\left\langle\varphi_l, \frac{\partial \phi_n}{\partial \overrightarrow{\mathbf{n}}}\right\rangle_{L^2(\partial D)},
\end{align}
where the fact $\Delta\psi_z^N(x)=\sum_{l=1}^N\varphi_l(z)\Delta\xi_l(x)=0$ for $x\in D$ and the boundary condition of $\phi_n$ are used in the third step, and the boundary condition $\xi_l|_{\partial D}=\varphi_l|_{\partial D}$ is used in the last step.  Since $\{\varphi_l\}_{l\in\mathbb N^+}$ is an orthonormal basis in $L^2( \partial D)$ and $\frac{\partial \phi_n}{\partial \overrightarrow{\mathbf{n}}}\in H^{\frac12}(\partial D)\subset L^2(\partial D)$, the proof is thus completed by taking $N\to\infty$. 
\end{proof}

\subsection{Uniqueness for recovering $|f|$}
Before showing the uniqueness of the inverse problem for determining the strength $|f|$ involved in the random source in \eqref{eq:model}, we establish the relationship between the measurement, i.e., the boundary flux data $\left.\frac{\partial u}{\partial \overrightarrow{\mathbf{n}}}\right|_{\partial D}$, and the unknown strength $|f|$. 

Consider the following auxiliary system with $m\in\{1,2\}$:
\begin{equation}\label{model_uzN}
\left\{\begin{aligned}
	\left(\partial_t^m-\Delta\right) u_z^N(x, t) & =0, & & (x, t) \in D \times(0, T), \\
	u_z^N(x, t) & =0, & & (x, t) \in \partial D \times(0, T), \\
	u_z^N(x, 0) & =-\psi_z^N(x), & & x \in D ,\\
	\partial_t u_z^N(x, 0) &= 0, & & x \in D,\quad \text{(only if $m=2$)}.
\end{aligned}\right.
\end{equation}
Since $\psi_z^N=\sum_{l=1}^N \varphi_l(z)\xi_l \in H^1(D)$ with $\xi_l \in H^1(D)$, there exists a weak solution $u_z^N$ with the following regularity:
\[
\begin{cases}
u_z^N \in C([0,T];L^2(D)), & m=1,\\[6pt]
u_z^N \in C([0,T];L^2(D)),\ \partial_t u_z^N \in C([0,T];H^{-1}(D)) & m=2,
\end{cases}
\]
see \cite[Section 5.9.2, Theorem 3 and Section 7.1.2, Theorem 3]{E98} for the case $m=1$, and \cite[Theorem 2.3]{LLT86} for the case $m=2$. Hence, Corollary~\ref{cor:strong} can be applied to weak solution $u_z^N$.

The following representation lemma gives a stochastic integral representation of the boundary flux $\left.\frac{\partial u}{\partial \overrightarrow{\mathbf{n}}}\right|_{\partial D}$ averaged over time.

\begin{lemma}\label{lem:representaition}
Let $\{u(\cdot,t)\}_{t\in[0,T]}$ be the stochastic strong solution of \eqref{eq:model} with $f\in L^2([0,T])$ and $g\in \dot{H}^1(D)$, and define 
\[
\omega_z^N(x,t)=\psi_z^N(x)+u_z^N(x,t), 
\]
where $u_z^N$ and $\psi_z^N$ are given in \eqref{psi_zN} and \eqref{model_uzN}, respectively. For almost every $t\in [0,T]$, it holds
$$
-\int_{0}^{t} \frac{\partial u}{\partial \overrightarrow{\mathbf{n}}} (z,\tau) d\tau = \lim_{N\rightarrow \infty} \int_{0}^{t}f(\tau)\langle g,\omega_z^N(\cdot,t-\tau)\rangle dW(\tau),
$$
where the limit on the right hand side is taken in the sense of $L^2(\partial D)$ with respect to $z\in\partial D$.
\end{lemma}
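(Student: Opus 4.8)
The plan is to feed the auxiliary solution $u_z^N$ of \eqref{model_uzN} into Corollary~\ref{cor:strong} as the test function $\zeta$, to observe that the two non-stochastic terms produced by that corollary cancel because $u$ and $u_z^N$ both carry the homogeneous Dirichlet boundary condition, and then to let $N\to\infty$, using the orthonormal basis $\{\varphi_l\}_{l\in\mathbb{N}^+}$ of $L^2(\partial D)$ and the harmonic extensions $\xi_l$ to recognise the time-averaged boundary flux.

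For $m=1$ I would take $\zeta(\cdot,r):=u_z^N(\cdot,r)$, which meets the hypotheses of Corollary~\ref{cor:strong}(i) (its regularity being the one recorded just before the lemma; in particular $u_z^N(\cdot,r)\in H^1_0(D)$ for a.e.\ $r$ and $\partial_r u_z^N=\Delta u_z^N$ in $H^{-1}(D)$), and satisfies $\zeta(\cdot,0)=-\psi_z^N$. In the resulting identity the two non-stochastic terms are $\int_0^t\langle\Delta u(\cdot,s),u_z^N(\cdot,t-s)\rangle\,ds$, which is a genuine $L^2(D)$ pairing because $u(\cdot,s)\in\dot{H}^2(D)=H^2(D)\cap H^1_0(D)$ by Theorem~\ref{thm:direct}, and $-\int_0^t\langle u(\cdot,s),\partial_t u_z^N(\cdot,t-s)\rangle_{-1}^1\,ds=-\int_0^t\langle u(\cdot,s),\Delta u_z^N(\cdot,t-s)\rangle_{-1}^1\,ds$, an $H^{-1}$--$H^1_0$ duality pairing; by Green's formula the first integrand equals $-\langle\nabla u(\cdot,s),\nabla u_z^N(\cdot,t-s)\rangle$ with no boundary term since $u_z^N$ vanishes on $\partial D$, and by the definition of the weak Laplacian the second integrand equals the same quantity with no boundary term since $u$ vanishes on $\partial D$. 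Hence the two cancel and Corollary~\ref{cor:strong}(i) reduces to
\[
-\langle u(\cdot,t),\psi_z^N\rangle=\int_0^t f(s)\,\langle g(\cdot),u_z^N(\cdot,t-s)\rangle\,dW(s).
\]

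Adding to both sides the deterministic constant $\langle g,\psi_z^N\rangle\int_0^t f(s)\,dW(s)=\int_0^t f(s)\langle g,\psi_z^N\rangle\,dW(s)$ turns the right-hand side into $\int_0^t f(s)\langle g,\omega_z^N(\cdot,t-s)\rangle\,dW(s)$, since $\omega_z^N=\psi_z^N+u_z^N$; and, writing $v(\cdot,t):=\int_0^t u(\cdot,s)\,ds\in\dot{H}^2(D)$ and using the strong-solution identity $\int_0^t\Delta u(\cdot,s)\,ds=u(\cdot,t)-g(\cdot)\int_0^t f(s)\,dW(s)$, i.e.\ $\Delta v(\cdot,t)=u(\cdot,t)-g(\cdot)\int_0^t f(s)\,dW(s)$, the left-hand side becomes $-\langle\Delta v(\cdot,t),\psi_z^N\rangle$. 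It then remains to compute $\lim_{N\to\infty}\langle\Delta v(\cdot,t),\psi_z^N\rangle$ in $L^2(\partial D)$. Since $\psi_z^N$ is harmonic in $D$ with $\psi_z^N|_{\partial D}=\sum_{l=1}^N\varphi_l(z)\varphi_l$ and $v(\cdot,t)\in H^2(D)\cap H^1_0(D)$, Green's identity (exactly as in the proof of Lemma~\ref{Lemma:cz}) gives
\[
\langle\Delta v(\cdot,t),\psi_z^N\rangle
=\left\langle \frac{\partial v}{\partial\overrightarrow{\mathbf{n}}}(\cdot,t),\,\psi_z^N|_{\partial D}\right\rangle_{L^2(\partial D)}
=\sum_{l=1}^N\varphi_l(z)\left\langle\varphi_l,\,\frac{\partial v}{\partial\overrightarrow{\mathbf{n}}}(\cdot,t)\right\rangle_{L^2(\partial D)},
\]
and because $\frac{\partial v}{\partial\overrightarrow{\mathbf{n}}}(\cdot,t)\in H^{1/2}(\partial D)\subset L^2(\partial D)$, completeness of $\{\varphi_l\}_{l\in\mathbb{N}^+}$ forces this partial sum to converge in $L^2(\partial D)$, as a function of $z$, to $\frac{\partial v}{\partial\overrightarrow{\mathbf{n}}}(z,t)$. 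Since the normal-trace map $\dot{H}^2(D)\to L^2(\partial D)$ is bounded it commutes with the Bochner integral defining $v$, so this limit equals $\int_0^t\frac{\partial u}{\partial\overrightarrow{\mathbf{n}}}(z,\tau)\,d\tau$, which is precisely the asserted formula. The case $m=2$ runs along the same lines with Corollary~\ref{cor:strong}(ii): there the left-hand side is $-\langle\partial_t u(\cdot,t),\psi_z^N\rangle$ because $\partial_t u_z^N(\cdot,0)=0$, the two non-stochastic terms cancel once $\partial_{tt}u_z^N=\Delta u_z^N$ is used, and the left-hand side is rewritten via the strong-solution identity $\Delta v(\cdot,t)=\partial_t u(\cdot,t)-g(\cdot)\int_0^t f(s)\,dW(s)$.

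I expect the exact cancellation of the two non-stochastic terms to be the main subtlety. One must keep straight that $\langle\Delta u(\cdot,s),u_z^N(\cdot,t-s)\rangle$ is an $L^2(D)$ pairing of genuine functions — legitimate precisely because Theorem~\ref{thm:direct} endows the strong solution with the spatial regularity $u(\cdot,s)\in\dot{H}^2(D)$ — whereas $\langle u(\cdot,s),\partial_t u_z^N(\cdot,t-s)\rangle_{-1}^1$ is an $H^{-1}$--$H^1_0$ duality pairing, and that both nevertheless reduce to $-\langle\nabla u,\nabla u_z^N\rangle$ only because \emph{both} $u$ and $u_z^N$ vanish on $\partial D$, so that each integration by parts loses its boundary term; for $m=2$ the lower regularity of the wave auxiliary solution $u_z^N$ will make this step need additional care. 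The other delicate point is the $L^2(\partial D)$-convergence as $N\to\infty$, which is exactly what the construction of $\{\varphi_l\}$ from $\{\partial\phi_n/\partial\overrightarrow{\mathbf{n}}\}$ together with the harmonic extensions $\xi_l$ — that is, Lemma~\ref{Lemma:cz} — is designed to deliver.
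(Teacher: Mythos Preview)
Your proposal is correct and follows essentially the same route as the paper: apply Corollary~\ref{cor:strong} with $\zeta=u_z^N$, combine with the strong-solution identity tested against $\psi_z^N$, extract the boundary flux via Green's formula and the harmonicity of $\psi_z^N$, and pass to the limit using the orthonormal basis $\{\varphi_l\}$. The only organisational difference is that you first observe the two non-stochastic integrals in Corollary~\ref{cor:strong} cancel exactly (both equalling $-\int_0^t\langle\nabla u,\nabla u_z^N\rangle\,ds$) and then repackage the remaining pairing as $-\langle\Delta v,\psi_z^N\rangle$ with $v=\int_0^t u\,ds$, applying Green's formula once to this; the paper instead keeps the $\langle\Delta u,\psi_z^N\rangle$ term from the strong-solution identity, adds it to the two surviving integrals to form $-\int_0^t\langle\Delta u,\omega_z^N\rangle\,d\tau-\int_0^t\langle\nabla u,\nabla\omega_z^N\rangle\,d\tau$, and applies Green's formula to the $\omega_z^N$-pair so that the boundary value $\omega_z^N|_{\partial D}=\psi_z^N$ delivers the flux directly inside the time integral. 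Your variant is arguably cleaner since the cancellation is explicit and Green's formula is invoked only once, on a pair with transparent regularity; the paper's route has the advantage of treating $m=1$ and $m=2$ in one stroke without introducing $v$. Your closing caveat about the wave case---that the auxiliary $u_z^N$ with non-$H_0^1$ initial data $-\psi_z^N$ need not lie in $L^2([0,T];H_0^1(D))$, straining the hypotheses of Corollary~\ref{cor:strong}(ii)---is well taken and is a point the paper glosses over as well.
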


\begin{proof}
According to \eqref{psi_zN} and \eqref{model_uzN}, together with the fact $\Delta\psi_z^N=\sum_{l=1}^N\varphi_l(z)\Delta\xi_l(x)=0$ for $x\in D$, it is easy to check that $w_z^N$ is a weak solution of the equation
\[
\left\{\begin{aligned}
\left(\partial_t^m-\Delta\right) \omega_z^N(x, t) & =0, & & (x, t) \in D \times(0, T), \\
\omega_z^N(x, t) & =\psi_z^N(x), & & (x, t) \in \partial D \times(0, T), \\
\omega_z^N(x, 0) & =0, & & x \in D ,\\
\partial_t\omega_z^N(x, 0) &= 0, & & x \in D,\quad \text{(only if $m=2$)},
\end{aligned}\right.
\]
where the existence of $\omega_z^N$ can be established by arguments analogous to those used for $u_z^N$ above.

From the definition of the stochastic strong solution ${u(\cdot,t)}_{t\in[0,T]}$, we have for $t\in[0,T]$ and $m\in\{1,2\}$ that
\begin{equation}\label{eq:4-1}
\int_0^tf(\tau)\langle g, \psi_z^N\rangle dW(\tau)= \langle \partial_t^{m-1} u(\cdot, t),\psi_z^N(\cdot)\rangle-\int_0^t\langle \Delta u(\cdot, \tau), \psi_z^N(\cdot)\rangle d\tau.
\end{equation}
For $u_z^N \in C([0,T];L^2(D))$ with in addition $\partial_tu_z^N\in C([0,T]; H^{-1}(D))$ if $m=2$, Corollary \ref{cor:strong} yields for $t\in[0,T]$ and $m\in\{1,2\}$ that 
\begin{align}\label{eq:4-2}
\int_0^tf(\tau)\langle g(\cdot), u_z^N(\cdot, t-\tau)\rangle dW(\tau)=& -\langle \partial_t^{m-1} u(\cdot,t), \psi_z^N(\cdot)\rangle - \int_0^t\langle \Delta u(\cdot, \tau), u_z^N(\cdot, t-\tau)\rangle\notag\\
&+\int_0^t \langle u(\cdot,s), \partial_t^m  u_z^N(\cdot, t-\tau)\rangle_{-1}^1 d\tau.
\end{align}
Combining \eqref{eq:4-1} with \eqref{eq:4-2}, and recalling the definition of $\omega_z^N$, we obtain
\begin{align*}
\int_0^tf(\tau)\langle g(\cdot), \omega_z^N(\cdot, t-\tau)\rangle dW(\tau)&= -\int_0^t\langle \Delta u(\cdot, \tau), \omega_z^N(\cdot,t-\tau)\rangle d\tau +\int_0^t \langle u(\cdot,\tau), \partial_t^m  \omega_z^N(\cdot, t-\tau)\rangle_{-1}^1 d\tau\\
&= -\int_0^t\langle \Delta u(\cdot, \tau), \omega_z^N(\cdot,t-\tau)\rangle d\tau -\int_0^t \langle \nabla u(\cdot,\tau), \nabla  \omega_z^N(\cdot, t-\tau)\rangle d\tau \\
&= - \int_0^t \left\langle \frac{\partial u}{\partial \overrightarrow{\mathbf{n}}}(\cdot, \tau), \psi_z^N \right\rangle_{L^2(\partial D)} d \tau
\end{align*}
for $t\in[0,T]$, where Green's Theorem and the boundary condition of $\omega_z^N$ are utilized in the last equality.

From the boundary condition of $\psi_z^N$ and $\xi_l$, we obtain that
$$
\psi_z^N(x)=\sum_{l=1}^N \varphi_l(z) \varphi_l(x), \quad x \in \partial D.
$$
Then it holds 
\begin{align}\label{rep3}
 \int_0^t  f(\tau) \langle g(\cdot),\omega_z^N(\cdot, t-\tau) \rangle d W(\tau) 
 =&- \int_0^t \left\langle \frac{\partial u}{\partial \overrightarrow{\mathbf{n}}}(\cdot, \tau), \psi_z^N \right\rangle_{L^2(\partial D)} d \tau\notag\\
=&-\int_0^t \sum_{l=1}^N \varphi_l(z)\left\langle\frac{\partial u}{\partial \overrightarrow{\mathbf{n}}}(\cdot, \tau), \varphi_l\right\rangle_{L^2(\partial D)} d \tau.
\end{align}

Denote
\[
S_N(z):=\int_0^t \sum_{l=1}^N \varphi_l(z)\left\langle\frac{\partial u}{\partial \overrightarrow{\mathbf{n}}}(\cdot, \tau), \varphi_l\right\rangle_{L^2(\partial D)} d \tau.
\]
We claim that $\{S_N(z)\}_{N=1}^\infty$ forms a Cauchy sequence in $L^2(\partial D)$. If the claim holds, by taking the limit in $L^2(\partial D)$ on both sides of \eqref{rep3} as $N\to\infty$, we obtain the desired result. In fact, by the trace theorem, the regularity property 
$u\in L^2(\Omega; L^2([0,T];\dot{H}^2(D)))$, and the elliptic regularity estimate 
$\|u(\cdot, \tau)\|_{H^2(D)}\leq C\|\Delta u(\cdot, \tau)\|_{L^2(D)}$ (cf. \cite[Section 6.2.3, Theorem 6 and Section  6.3.2, Theorem 4]{E98}), we deduce that for $t\in[0,T]$,
\[
\Big\|\int_0^t\frac{\partial u}{\partial \overrightarrow{\mathbf{n}}}(\cdot, \tau)d\tau \Big\|_{L^2(\partial D)}^2
\leq C\int_0^t\|u(\cdot, \tau)\|_{H^2(D)}^2d\tau
\leq C\int_0^t\|u(\cdot, \tau)\|_{\dot{H}^2(D)}^2d\tau <\infty.
\]
Hence, for $N_1,N_2\in \mathbb{N}$ with $N_1<N_2$, we obtain
\begin{align*}
\left\| \sum_{l=N_1}^{N_2}\int_0^t \varphi_l(\cdot)\left\langle \frac{\partial u}{\partial \overrightarrow{\mathbf{n}}}(\cdot, \tau), \varphi_l\right\rangle_{L^2(\partial D)}d\tau \right\|_{L^2(\partial D)}^2 
&= \left\| \sum_{l=N_1}^{N_2} \varphi_l(\cdot)\left\langle \int_0^t \frac{\partial u}{\partial \overrightarrow{\mathbf{n}}}(\cdot, \tau)d\tau, \varphi_l\right\rangle_{L^2(\partial D)} \right\|_{L^2(\partial D)}^2 \\
&= \sum_{l=N_1}^{N_2} \left\langle \int_0^t \frac{\partial u}{\partial \overrightarrow{\mathbf{n}}}(\cdot, \tau)d\tau, \varphi_l\right\rangle_{L^2(\partial D)}^2 \longrightarrow 0
\end{align*}
as $N_1, N_2 \to \infty$, which verifies the claim. The proof is thereby completed.
\end{proof}

From the above lemma, the following corollary can be deduced.

\begin{corollary}
Under the settings in Lemma \ref{lem:representaition}, assume further that $g \in \dot{H}^{d-1+\epsilon}(D)$ for some arbitrary $\epsilon > 0$. Then for almost every $t \in[0, T]$, it holds
\begin{equation}\label{rep4}
-\int_{0}^{t} \frac{\partial u}{\partial \overrightarrow{\mathbf{n}}} (z,\tau) d\tau =\int_0^t f(\tau) G_z(t-\tau) d W(\tau),
\end{equation}
where 
\begin{equation}\label{def:Gz}
G_z(t) =\left\{\begin{aligned}
&\sum_{n=1}^{\infty} c_{z, n} g_n\left(1-e^{-\lambda_n t}\right), \quad &m=1,\\
&\sum_{n=1}^{\infty} c_{z, n} g_n\left(1-\cos(\sqrt{\lambda_n}t)\right), \quad &m=2.
\end{aligned}\right.
\end{equation}
\end{corollary}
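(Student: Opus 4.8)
The plan is to evaluate the limit on the right‑hand side of Lemma~\ref{lem:representaition} explicitly in the eigenbasis $\{\phi_n\}$ of $-\Delta$, writing $g_n=\langle g,\phi_n\rangle$ so that the hypothesis $g\in\dot{H}^{d-1+\epsilon}(D)$ reads $\sum_n\lambda_n^{d-1+\epsilon}g_n^2<\infty$. Since $\psi_z^N=\sum_n c_{z,n}^N\phi_n$ by the definition of $c_{z,n}^N$ and $\psi_z^N$ is harmonic in $D$, the weak solution $u_z^N$ of \eqref{model_uzN} admits the eigenexpansion $u_z^N(\cdot,s)=-\sum_n c_{z,n}^N e^{-\lambda_n s}\phi_n$ when $m=1$ and $u_z^N(\cdot,s)=-\sum_n c_{z,n}^N\cos(\sqrt{\lambda_n}\,s)\phi_n$ when $m=2$ (in both cases one checks directly that the stated regularity and the initial/boundary data in \eqref{model_uzN} hold). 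Consequently, with $\omega_z^N=\psi_z^N+u_z^N$,
\[
\langle g,\omega_z^N(\cdot,s)\rangle=\sum_{n=1}^\infty g_n\,c_{z,n}^N\,h_n(s)=:G_z^N(s),
\]
where $h_n(s)=1-e^{-\lambda_n s}$ for $m=1$ and $h_n(s)=1-\cos(\sqrt{\lambda_n}\,s)$ for $m=2$, so $0\le h_n\le1$ (resp. $|h_n|\le2$). By Lemma~\ref{Lemma:cz}, $c_{z,n}^N\to c_{z,n}=-\lambda_n^{-1}\frac{\partial\phi_n}{\partial\overrightarrow{\mathbf{n}}}(z)$ in $L^2(\partial D)$, so termwise $G_z^N(s)\to G_z(s)$ with $G_z$ as in \eqref{def:Gz}.

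The role of the strengthened hypothesis $g\in\dot{H}^{d-1+\epsilon}(D)$ is to make the series defining $G_z$ genuinely (absolutely, uniformly) convergent. Applying the Sobolev embedding $\dot{H}^{d/2+1+\epsilon'}(D)\hookrightarrow C^1(\overline D)$ for small $\epsilon'>0$ (valid for $d\le3$, using the smoothness of $\partial D$ already invoked for elliptic regularity) to $\phi_n$ gives $\big\|\tfrac{\partial\phi_n}{\partial\overrightarrow{\mathbf{n}}}\big\|_{L^\infty(\partial D)}\lesssim\|\phi_n\|_{\dot{H}^{d/2+1+\epsilon'}(D)}=\lambda_n^{(d+2)/4+\epsilon'/2}$, hence $\sup_{z\in\partial D}|c_{z,n}|\lesssim\lambda_n^{(d-2)/4+\epsilon'/2}$. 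Combining this with Weyl's asymptotics $\lambda_n\asymp n^{2/d}$ and the Cauchy--Schwarz inequality,
\[
\sum_{n=1}^\infty|g_n|\sup_{z\in\partial D}|c_{z,n}|
\le\Big(\sum_n\lambda_n^{d-1+\epsilon}g_n^2\Big)^{1/2}\Big(\sum_n\lambda_n^{(d-2)/2+\epsilon'-(d-1+\epsilon)}\Big)^{1/2}<\infty
\]
provided $\epsilon'<\epsilon$, since the second exponent is then $-d/2+\epsilon'-\epsilon<-d/2$. Together with $|h_n|\le2$, the Weierstrass $M$-test shows $G_z(s)$ is well defined, bounded, and continuous on $\partial D\times[0,T]$; in particular $\int_0^t f^2(\tau)|G_z(t-\tau)|^2\,d\tau<\infty$, so the stochastic integral on the right of \eqref{rep4} makes sense.

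It then remains to pass the limit through the stochastic integral. Since $G_z^N(t-\tau)-G_z(t-\tau)$ is deterministic, It\^o's isometry and Fubini's theorem give
\[
\mathbb{E}\int_{\partial D}\Big|\int_0^t f(\tau)\big(G_z^N(t-\tau)-G_z(t-\tau)\big)\,dW(\tau)\Big|^2 dS(z)
=\int_0^t f^2(\tau)\,\big\|G_{\cdot}^N(t-\tau)-G_{\cdot}(t-\tau)\big\|_{L^2(\partial D)}^2\,d\tau,
\]
and by Minkowski's inequality together with $|h_n|\le2$, $\|G_{\cdot}^N(s)-G_{\cdot}(s)\|_{L^2(\partial D)}\le2\sum_n|g_n|\,\|c_{\cdot,n}^N-c_{\cdot,n}\|_{L^2(\partial D)}$. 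Each summand tends to $0$ by Lemma~\ref{Lemma:cz}, while the uniform‑in‑$N$ bound $\|c_{\cdot,n}^N-c_{\cdot,n}\|_{L^2(\partial D)}\le\lambda_n^{-1}\big\|\tfrac{\partial\phi_n}{\partial\overrightarrow{\mathbf{n}}}\big\|_{L^2(\partial D)}$ is summable against $|g_n|$ by the previous display, so dominated convergence for series gives $\|G_{\cdot}^N(s)-G_{\cdot}(s)\|_{L^2(\partial D)}\to0$ uniformly in $s\in[0,T]$; a second dominated convergence in $\tau$ (using $f\in L^2([0,T])$) then sends the right‑hand side above to $0$. Hence $\int_0^t f(\tau)G_z^N(t-\tau)\,dW(\tau)\to\int_0^t f(\tau)G_z(t-\tau)\,dW(\tau)$ in $L^2(\partial D\times\Omega)$; since Lemma~\ref{lem:representaition} identifies $\langle g,\omega_z^N(\cdot,t-\tau)\rangle$ with $G_z^N(t-\tau)$ and states that $-\int_0^t\frac{\partial u}{\partial\overrightarrow{\mathbf{n}}}(z,\tau)\,d\tau$ is the $L^2(\partial D)$‑limit of $\int_0^t f(\tau)G_z^N(t-\tau)\,dW(\tau)$, uniqueness of limits yields \eqref{rep4}. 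The hyperbolic case $m=2$ is handled identically, with $1-\cos(\sqrt{\lambda_n}\,\cdot)$ replacing $1-e^{-\lambda_n\cdot}$ throughout.

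The only genuine difficulty is the summability in the second paragraph: one must align the eigenfunction normal‑derivative estimate, Weyl's law, and the Sobolev regularity of $g$, and it is exactly this balance that pins down the threshold $d-1+\epsilon$ (and the dimensional restriction $d\in\{1,2,3\}$). Everything else — the eigenexpansion of $u_z^N$, It\^o's isometry, and the two dominated‑convergence arguments — is routine bookkeeping.
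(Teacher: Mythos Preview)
Your proof is correct and shares the same overall architecture as the paper's---compute $\langle g,\omega_z^N(\cdot,s)\rangle$ explicitly as $\sum_n c_{z,n}^N g_n h_n(s)$, establish absolute summability of the limiting series, and pass to the limit inside the stochastic integral---but the two key technical ingredients differ. First, you obtain the formula for $\langle g,\omega_z^N\rangle$ by directly expanding $u_z^N$ via the semigroup/cosine action on the eigenbasis; the paper instead reaches the same expression through a Green's theorem computation on each $\langle\phi_n,\omega_z^N\rangle$ that produces an ODE identity in $t$. Your route is more direct. Second, and more substantively, for the summability you bound $c_{z,n}$ in $L^\infty(\partial D)$ via the Sobolev embedding $H^{d/2+1+\epsilon'}(D)\hookrightarrow C^1(\overline D)$ combined with elliptic regularity $\|\phi_n\|_{H^s}\lesssim\lambda_n^{s/2}$, whereas the paper bounds $c_{\cdot,n}$ only in $L^2(\partial D)$ using the Hassell--Tao estimate $\|\partial\phi_n/\partial\overrightarrow{\mathbf{n}}\|_{L^2(\partial D)}\lesssim\lambda_n^{1/2}$ together with the cruder bound $|g_n|\le\lambda_n^{-r/2}\|g\|_{\dot H^r}$. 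Both approaches land on exactly the same threshold $g\in\dot H^{d-1+\epsilon}$; yours is more elementary in that it avoids the nontrivial spectral-geometric input from Hassell--Tao, at the price of a weaker pointwise exponent on the eigenfunction side, and it delivers the bonus that $G_z$ is bounded and continuous \emph{uniformly} in $z\in\partial D$ rather than merely in $L^2(\partial D)$. One small inaccuracy: the Sobolev embedding you invoke is valid in every dimension, so the parenthetical ``valid for $d\le3$'' is misplaced---neither your argument nor the paper's actually uses the restriction $d\in\{1,2,3\}$ in the proof of this corollary.
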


\begin{proof}
Based on Parseval's equality, Green's theorem, the definition of $\omega_z^N$, and Lemma \ref{Lemma:cz}, we derive
\begin{align*}
\left\langle g(\cdot), \omega_z^N(\cdot,t-\tau)\right\rangle&= \sum_{n=1}^{\infty}g_n\left\langle \phi_n, \omega_z^N(\cdot,t-\tau)\right\rangle= \sum_{n=1}^{\infty} -\frac{g_n}{\lambda_n}\left\langle \Delta\phi_n, \omega_z^N(\cdot,t-\tau)\right\rangle\\
&= \sum_{n=1}^{\infty}\frac{g_n}{\lambda_n}\left[\-\left\langle \nabla\phi_n, \nabla\omega_z^N(\cdot,t-\tau)\right\rangle - \left\langle \frac{\partial \phi_n}{\partial \overrightarrow{\mathbf{n}}}, \omega_z^N(\cdot,t-\tau)\right\rangle_{L^2(\partial D)}\right]\\
&= \sum_{n=1}^{\infty}g_n\left[-\frac{1}{\lambda_n}\partial_t^m \left\langle \phi_n, \omega_z^N(\cdot,t-\tau)\right\rangle +c_{z,n}^N\right],
\end{align*}
where we used \eqref{eq:cznN} and the boundary condition of $\omega_z^N$ in the last step.
It then gives that
$$
\left\langle g(\cdot), \omega_z^N(\cdot, t-\tau)\right\rangle_{L^2(D)} =\left\{\begin{aligned}
&\sum_{n=1}^{\infty} c_{z, n}^N g_n\left(1-e^{-\lambda_n(t-\tau)}\right), \quad &m=1,\\
&\sum_{n=1}^{\infty} c_{z, n}^N g_n\left(1-\cos(\sqrt{\lambda_n}(t-\tau))\right), \quad &m=2.
\end{aligned}\right.
$$

Next, we will show $\|\sum_{n=1}^{\infty}g_nc_{\cdot,n} \|_{L^2(\partial D)}<\infty$ such that the dominated convergence theorem can be applied to the above series. 
Since $g\in \dot{H}^{d-1+\epsilon}$, there exists $d-1<r<d-1+\epsilon$ such that $\|g\|_{\dot{H}^r(D)}<\infty$. Then
$$
g_n=\langle g,\phi_n \rangle= \lambda_n^{-r/2} \langle g,(-\Delta)^{r/2}\phi_n \rangle = \lambda_n^{-r/2} \langle (-\Delta)^{r/2} g,\phi_n \rangle \leq \lambda_n^{-r/2}\| g\|_{\dot{H}^r(D)}.
$$
Using Lemma \ref{Lemma:cz} and the estimate of normal derivative $\|\frac{\partial \phi_n}{\partial \overrightarrow{\mathbf{n}}}\|_{L^2(\partial D)} \leq C\lambda_n^{\frac12}$ (cf.\cite[Theorem 1.1]{HT02}) with the constant $C>0$ independent of $n$, we obtain
$$
\|c_{\cdot,n}\|_{L^2(\partial D)} = \frac{1}{\lambda_n}\left\|\frac{\partial \phi_n}{\partial \overrightarrow{\mathbf{n}}}\right\|_{L^2(\partial D)}\leq C\lambda_n^{-\frac12}.
$$
Hence by Weyl's law ($\lambda_n\sim n^{\frac{2}{d}}$ as $n\rightarrow \infty$), it holds  
\[
\left\|\sum_{n=1}^{\infty}g_nc_{\cdot,n} \right\|_{L^2(\partial D)}\leq C\sum_{n=1}^{\infty} \lambda_n^{-\frac{1+r}{2}}\leq C \sum_{n=1}^{\infty} n^{-\frac{1+r}{d}} < \infty.
\]
The proof is then completed.
\end{proof}

Utilizing the representation of boundary flux data \eqref{rep4}, we are now in the position to give the proof of uniqueness theorem for inverse problem.

\begin{proof}[Proof of Theorem \ref{thm:inverse}]
From \eqref{rep4} and It{\^{o}}'s isometry, it follows that
\begin{equation}\label{recover:f}
\mathbb{V}\left[ \int_0^t \frac{\partial u}{\partial \overrightarrow{\mathbf{n}}}(z,\tau) \, d\tau \right] = \int_0^t f^2(\tau) G_z^2(t - \tau) \, d\tau,\quad z\in D_{ob}.
\end{equation}
Lemmas \ref{lemma_vanish} and \ref{Lemma:cz} ensure that $G_z$ defined in \eqref{def:Gz} does not vanish on $D_{ob}$. Then the uniqueness of $|f|$ is given by the Titchmarsh convolution theorem (cf.\cite[Lemma 4.1]{GHBKS14}), which completes the proof.
\end{proof}

\section{Numerical reconstructions.}
In this section, we present numerical reconstructions of \( |f(t)| \) from boundary flux measurements, utilizing the representation formula \eqref{recover:f} in both one-dimensional and two-dimensional cases. To avoid the inverse crime, we discretize the original equation \eqref{eq:model} using the finite difference method, rather than discretizing the explicit representation of the solution as given in the definition of the mild solution \eqref{solution:mild}. The numerical solution for each sample path is computed using the implicit Euler scheme, and the boundary normal derivative data are then approximated by the numerical solution. For the inverse problem, formula \eqref{recover:f} can be viewed as a Volterra equation of the first kind, where the recovery of $|f(t)|$ is known to be ill-posed. To deal with this ill-posedness, we employ the Kaczmarz iterative algorithm and verify its stability under random perturbations through numerical experiments.

\subsection{Synthetic data}
We begin by describing the numerical discretization of the direct problem. For simplicity, the procedure is illustrated in the one-dimensional setting with $D=[0,1]$.

Let \( N_t \) and \( N_x \) denote the amounts of temporal and spatial partitions, respectively. Define the temporal and spatial step sizes as \( h_t \) and \( h_x \), and introduce the discretization nodes:
\[
t_j = j h_t, \quad j = 0, 1, \dotsc, N_t; \qquad x_i = i h_x, \quad i = 0, 1, \dotsc, N_x.
\]
Let \( u_i^j \) denote the numerical approximation of \( u(x_i, t_j) \). The second-order spatial and temporal derivatives are discretized using central finite difference scheme:
\[
\delta_x^2 u_i^j := \frac{1}{h_x^2} \left( u_{i+1}^j - 2u_i^j + u_{i-1}^j \right), \quad \delta_t^2 u_i^j := \frac{1}{h_t^2} \left( u_{i}^{j+1} - 2u_i^j + u_{i}^{j-1} \right).
\]

Applying the implicit Euler scheme, we obtain the following numerical schemes for the direct problem \eqref{eq:model}. For the parabolic case with \( m = 1 \), the scheme reads:
\[
\left\{
\begin{aligned}
u_i^{j+1} - u_i^j - h_t \delta_x^2 u_i^{j+1} &= f(t_j) g(x_i) \Delta W(t_j), && i = 1,\ldots, N_x - 1,\quad j = 0,\ldots, N_t - 1, \\
u_i^0 &= 0,\quad u_0^j = u_{N_x}^j = 0, && i = 0,\ldots, N_x,\quad j = 1,\ldots, N_t,
\end{aligned}
\right.
\]
while for the hyperbolic case with \( m = 2 \), the scheme becomes:
\[
\left\{
\begin{aligned}
\delta_t^2 u_i^j - \frac{1}{4}\left(\delta_x^2 u_i^{j+1} + 2\delta_x^2 u_i^j + \delta_x^2 u_i^{j-1}\right) &= f(t_j) g(x_i) \frac{\Delta W(t_j)}{h_t}, && i = 1,\ldots, N_x - 1,\quad j = 1,\ldots, N_t - 1, \\
\frac{2}{h_t^2} u_i^1 - \frac{1}{2} \delta_x^2 u_i^1 &= f(t_0) g(x_i) \frac{\Delta W(t_0)}{h_t}, && i = 1,\ldots, N_x - 1, \\
u_i^0 &= 0,\quad u_0^j = u_{N_x}^j = 0, && i = 0,\ldots, N_x,\quad j = 1,\ldots, N_t,
\end{aligned}
\right.
\]
where \( \Delta W(t_j) = W(t_{j+1}) - W(t_j) \) denotes the increment of the Brownian motion. The second equation in the hyperbolic case is to simulate the numerical solution at \( t = t_1 \), and is derived by applying the first equation at \( j = 0 \), combining with
 \( u_i^0 = 0 \) and $u_i^1=u_i^{-1}$ due to the initial conditions $u(x,0)=0$ and \( \partial_t u(x,0) = 0 \) for $x\in D$.

To generate the boundary data required for the inverse problem, we approximate the boundary flux \( \frac{\partial u}{\partial \overrightarrow{\mathbf{n}}} \) using the following finite differences:
\[
\frac{\partial u}{\partial \overrightarrow{\mathbf{n}}} (0,t_j) \approx -\frac{u_1^j - u_0^j}{h_x}, \quad \frac{\partial u}{\partial \overrightarrow{\mathbf{n}}} (1,t_j) \approx \frac{u_{N_x - 1}^j - u_{N_x}^j}{h_x}
\]
and define the noisy data
\begin{align}\label{eq:noisydata}
\frac{\partial u^\sigma}{\partial \overrightarrow{\mathbf{n}}} (0,t_j):=-\frac{u_1^j - u_0^j}{h_x}+\sigma U,\quad \frac{\partial u^\sigma}{\partial \overrightarrow{\mathbf{n}}} (1,t_j):=\frac{u_{N_x - 1}^j - u_{N_x}^j}{h_x}+\sigma U
\end{align}
for $j = 1, \ldots, N_t$, where $\sigma>0$ denotes the noise level and $U$ denotes a random variable satisfying the uniform distribution on $[-1,1]$.

These approximations enable us to simulate the boundary data numerically appearing on the left-hand side of equation \eqref{recover:f}.

\subsection{Discretization of the recovery formula}
To discretize the recovery formula \eqref{recover:f}, we fix \( z \in \partial D= \{0,1\} \). 
For \( t = t_j \) with $j=1,\ldots,N_t$, the left-hand side of \eqref{recover:f} is approximated by
\[
\mathbb{V} \left[ \int_0^{t_j} \frac{\partial u}{\partial \overrightarrow{\mathbf{n}}}(z,\tau)d\tau \right]\approx \frac{h_t^2}P\sum_{p=1}^P \left[ \sum_{k=0}^{j-1} \frac{\partial u}{\partial \overrightarrow{\mathbf{n}}}(z, t_k,\omega_p) \right]^2,
\]
where $P$ denotes the number of sample paths and $ \frac{\partial u}{\partial \overrightarrow{\mathbf{n}}}(z,0,\omega_p)$ denotes the observation of the boundary flux on the sample path $\omega_p\in\Omega$, and the right-hand side can be approximated by
\[
\int_0^{t_j} f^2(\tau) G_z^2(t_1 - \tau)d\tau \approx  h_t \sum_{k=0}^{j-1} f^2(t_k) G_z^2(t_{j-k}).
\]
Combining the above two formulas, we propose the numerical recovery formula based on the noisy data defined in \eqref{eq:noisydata}:
\[
V_j:=\frac{h_t}P\sum_{p=1}^P \left[ \sum_{k=0}^{j-1} \frac{\partial u^\sigma}{\partial \overrightarrow{\mathbf{n}}}(z, t_k,\omega_p) \right]^2= \sum_{k=0}^{j-1} f_k^2 G_z^2(t_{j-k}),\quad j=1,\ldots, N_t,
\]
where $f_k^2$ is an approximation of $f^2(t_k)$. 
The numerical recovery formula can also be rewritten into the following discrete linear system at each observation point \( z \in \partial D \):
\begin{equation}\label{eq:V} 
\mathbf{V}_z = \mathbf{G}_z \mathbf{f},
\end{equation}
where $\mathbf{V}_z=[V_1,\ldots,V_{N_t}]^\top$, \( \mathbf{f}= [f_0^2, f_1^2, \dotsc, f_{N_t-1}^2]^\top \), and
\( \mathbf{G}_z=\left[G_{jk}\right]_{1\le j,k\le N_t} \in \mathbb{R}^{N_t \times N_t} \) is a lower triangular matrix with 
\begin{equation*}
G_{jk}=\left\{
\begin{aligned}
&G_z^2(t_{j-k+1}),\quad& j\ge k,\\
&0,\quad& j<k.
\end{aligned}
\right.
\end{equation*}
To numerically recover $|f(t)|$, it then suffices to solve \eqref{eq:V} for ${\bf f}$.

Note that the matrix  \( \mathbf{G}_z \) is highly ill-conditioned, and hence the inverse problem is ill-posed. To deal with the ill-posedness of the inverse problem, we adopt the Kaczmarz iterative method, utilizing measurements at different observation points \( z \in \partial D = \{0,1\} \).
Specifically, starting from an initial guess \( \mathbf{f}^{(0)} = \mathbf{0} \), we employ a regularized block Kaczmarz iterative scheme that alternately uses the measurements at \( z = 0 \) and \( z = 1 \). The update at the \( k \)th iteration is given by
\[
\mathbf{f}^{(k+1)} = \mathbf{f}^{(k)} + \left( \mathbf{G}_{z_k}^\top \mathbf{G}_{z_k} + \alpha \mathbf{I} \right)^{-1} \mathbf{G}_{z_k}^\top \left( \mathbf{V}_{z_k} - \mathbf{G}_{z_k} \mathbf{f}^{(k)} \right),
\]
where \( z_k = 0 \) for even \( k \), and \( z_k = 1 \) for odd \( k \). The regularization parameter \( \alpha > 0 \) improves the stability of the iteration. The stopping criterion is based on the combined residual:
\[
\|\mathbf{G}_0 \mathbf{f}^{(k)} - \mathbf{V}_0\| + \|\mathbf{G}_1 \mathbf{f}^{(k)} - \mathbf{V}_1\| < \varepsilon
\]
for some given tolerance parameter $\varepsilon>0$.

\subsection{Numerical examples}

In this part, we present several numerical examples to demonstrate the validity and effectiveness of the proposed method. The first example illustrates the inversion results in the one-dimensional parabolic setting under varying number of sample paths and noise levels. The second example demonstrates the performance of the method in the one-dimensional hyperbolic case, while the last example addresses the two-dimensional parabolic case. 

{\bf Example 1} (One-dimensional parabolic case): 
We choose the spatial coefficient function as \( g(x) = x(1 - x) \) for $x\in D=(0,1)$. To generate the synthetic data, we set $h_x=2^{-6}$ and $h_t=2^{-7}$. The regularization parameter is set as \( \alpha = 10^{-2} \) and the tolerance parameter is chosen as $\varepsilon=2\times10^{-3}$. 

\begin{figure}[H]
\begin{tabular}{ccc}
    \includegraphics[ width=0.3\textwidth]{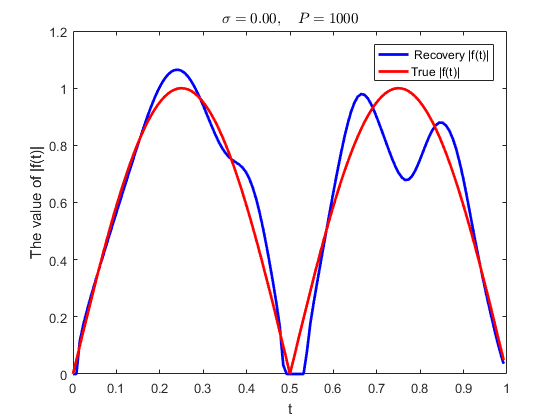}
    \includegraphics[ width=0.3\textwidth]{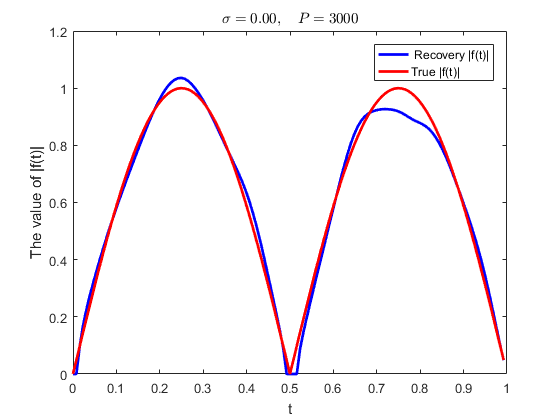}
    \includegraphics[ width=0.3\textwidth]{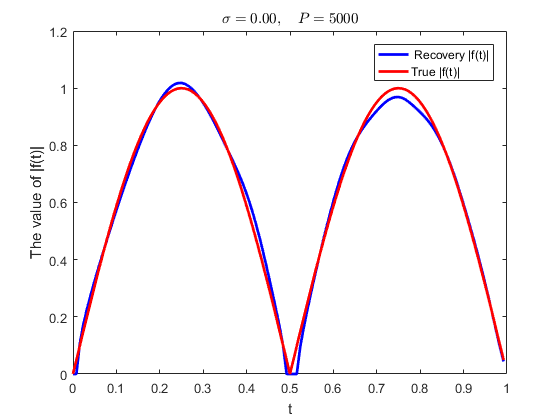} 
\end{tabular}
\caption{Reconstruction results of $|f| $ with $P=1000,3000,5000$ ($\sigma=0$).}
\label{fig:ex1_1}
\end{figure}

In Figures \ref{fig:ex1_1} and \ref{fig:ex1_2}, we consider the reconstruction of $|f(t)|$ with
\[
f(t) =  \sin(2\pi t),\quad t\in[0,1].
\]
Figure \ref{fig:ex1_1} shows the recovery of $|f|$ under different number of sample paths $P=1000,3000,5000$ with the noise level $\sigma=0$. Figure \ref{fig:ex1_2} displays the reconstructed function \( |f| \) under  noise levels $\sigma=0.05,0.1,0.2$ with $P=5000$ sample paths. 
As observed from the figures, the reconstruction becomes more accurate as we impose more realizations, and remains stable under increasing noise.

\begin{figure}[h]
\begin{tabular}{ccc}
    \includegraphics[ width=0.3\textwidth]{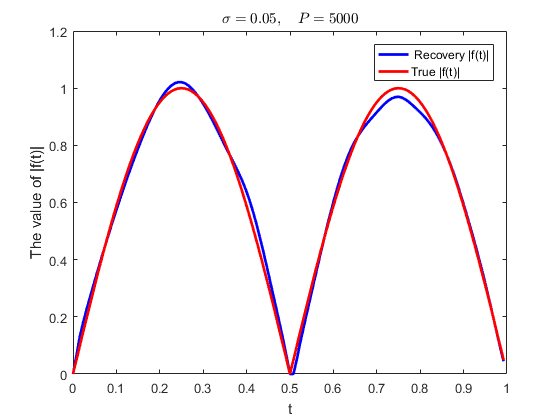}
    \includegraphics[ width=0.3\textwidth]{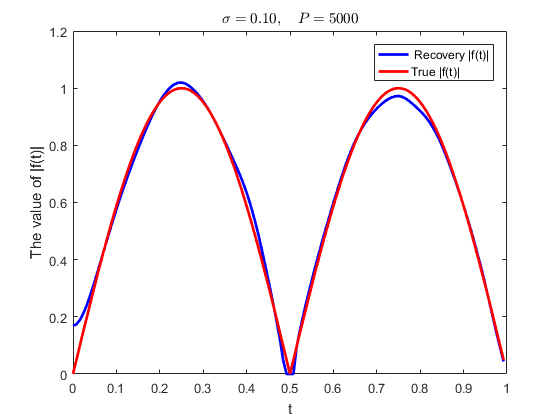}
    \includegraphics[ width=0.3\textwidth]{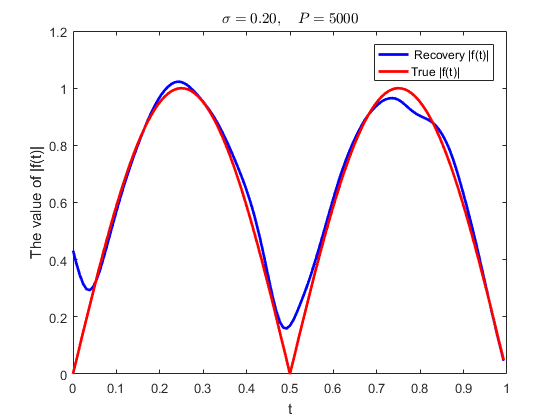} 
\end{tabular}
\caption{Reconstruction results of $|f| $ with $\sigma=0.05,0.1,0.2$ ($P=5000$).}
\label{fig:ex1_2}
\end{figure}

In Figure \ref{fig:ex2}, we consider a more oscillatory $f$ containing two Fourier modes:
\[
f(t) = 0.6 - 0.3 \cos(2\pi t) - 0.3 \cos(4\pi t),\quad t\in[0,1],
\]  
and a discontinuous $f$:
\[
f(t) =  1.5\chi_{\{0.2 < t \leq 0.6\}} + 1 \chi_{\{0.6 < t \leq 0.8\}},\quad t\in[0,1].
\] 
We choose the number of sample paths as $P=10000$ and the noise level \( \sigma = 0 \). The numerical results demonstrate that our inversion algorithm is capable of recovering more oscillatory \( f \).

\begin{figure}[H]
\begin{tabular}{ccc}
    \includegraphics[ width=0.35\textwidth]{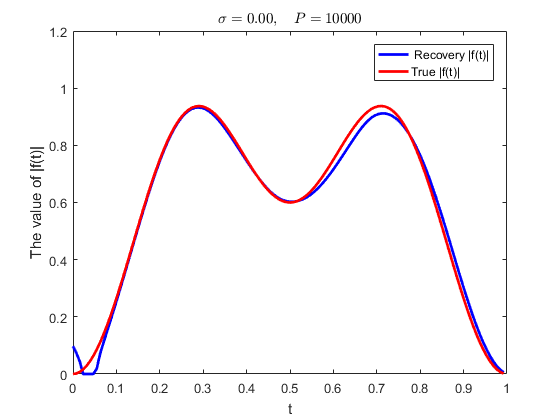}
    \includegraphics[ width=0.35\textwidth]{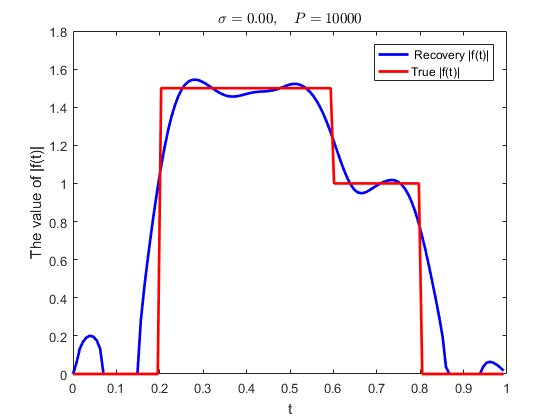} 
\end{tabular}
\caption{Reconstruction results of $|f|$ with $\sigma=0$ and $P=10000$.}
\label{fig:ex2}
\end{figure}

{\bf Example 2} (One-dimensional hyperbolic case): Reconstruction of \( |f| \) with
\[
f(t) = 0.6 - 0.3 \cos(2\pi t) - 0.3 \cos(4\pi t),\quad t\in[0,1],
\]  
and
\[
f(t) =  1.5\chi_{\{0.2 < t \leq 0.6\}} + 1 \chi_{\{0.6 < t \leq 0.8\}},\quad t\in[0,1].
\] 
Set \( g(x) = x(1 - x)\) for \(x\in D = (0,1) \),  \(h_x = 2^{-6},  h_t = 2^{-7},\alpha = 5\times10^{-7}, \varepsilon=10^{-4} \), $\sigma=0$, and $P=10000$. The reconstruction result of \( |f| \) is shown in Figure \ref{fig:ex4}, which indicates the accuracy of proposed method for hyperbolic case.

\begin{figure}[H]
\begin{tabular}{ccc}
    \includegraphics[ width=0.35\textwidth]{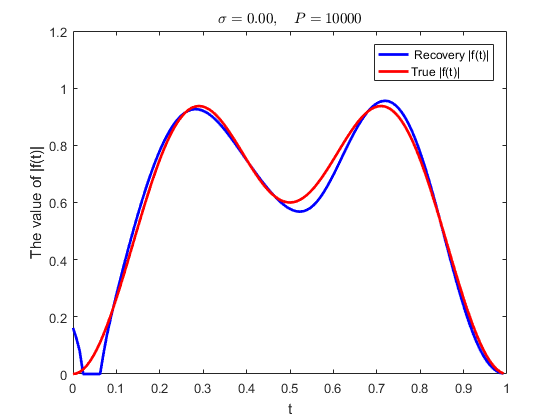}
    \includegraphics[ width=0.35\textwidth]{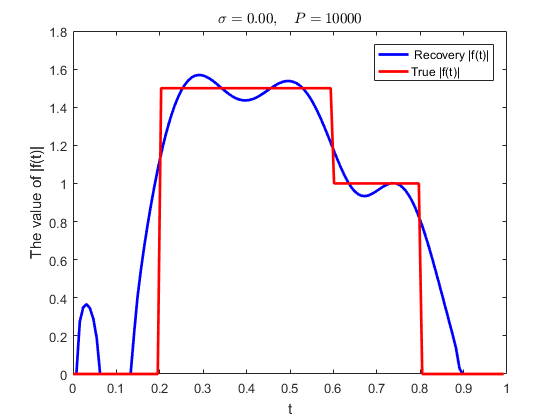} 
\end{tabular}
\caption{Reconstruction results of $|f|$ with $\sigma=0$ and $P=10000$.}
\label{fig:ex4}
\end{figure}

{\bf Example 3} (Two-dimensional parabolic case): Reconstruction of \( |f| \) with
\[
f(t) = 0.6 - 0.3 \cos(2\pi t) - 0.3 \cos(4\pi t),\quad t\in[0,1].
\]  
We set \( g(x,y) = xy(1 - x)(1-y) \) for  \((x,y)\in D = (0,1)\times (0,1) \), $h_x=h_y=2^{-5}$, $h_t=2^{-7}$, \( \alpha = 10^{-9} \), \(\varepsilon=10^{-6}\), \(\sigma=0\), and $P=5000$. The reconstruction result of \(|f|\) is shown in Figure~\ref{fig:ex3} by using different observation points. The left figure is generated by measurements at a single observation point \(z=(0,0.2)\), while the right one shows the result with observation points \(z_1=(0,0.2),\ z_2=(0.6,0)\) and \(z_3=(1,0.4)\). 
The reconstruction achieved with multiple observation points demonstrates enhanced accuracy, highlighting the significant role of spatially enriched data in improving inversion performance.

\begin{figure}[H]
\begin{tabular}{ccc}
    \includegraphics[ width=0.35\textwidth]{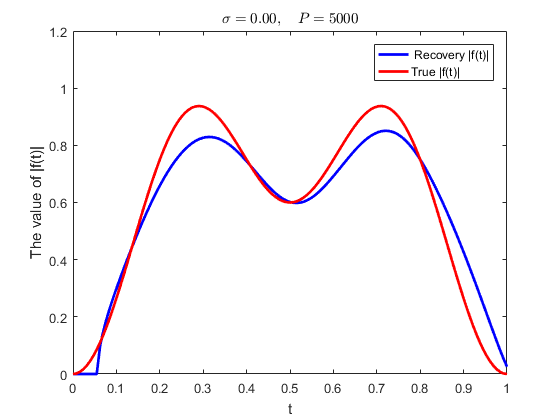}
    \includegraphics[ width=0.35\textwidth]{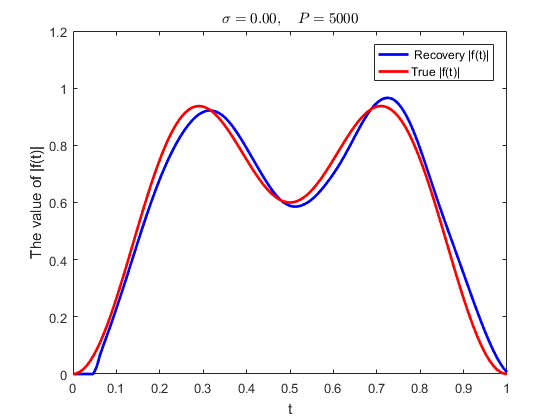} 
\end{tabular}
\caption{Reconstruction results of $|f|$ with $\sigma=0$ and $P=5000$.}
\label{fig:ex3}
\end{figure}

\section{Conclusion and future works.}

In this paper, we have studied the inverse random source problem associated with stochastic heat and wave equations driven by a random source of the form \( g(x)f(t)\dot{W}(t) \). 
A novel framework has been introduced to uniquely determine the strength $|f|$ of the unknown time-dependent function $f$ in the random source. It demonstrates the well-posedness of the stochastic strong solutions, establishes a stochastic integral-by-parts-type formula, and develops a recovery formula. The recovery formula reveals the relationship between the boundary flux and the stochastic convolution via an auxiliary function argument, and indicates that $|f|$ can be uniquely determined by the boundary flux $\frac{\partial u}{\partial \overrightarrow{\mathbf{n}}}$ over the measurement domain $D_{ob}\times(0,T)$.

For inverse random source problems related to time-dependent equations, several challenges remain unsolved:
\begin{itemize}
\item[(1)] \textbf{Unique recovery of both $f(t)$ and $g(x)$.} While this work focuses on the recovery of the time-dependent component, it remains an open question whether the spatial profile \( g(x) \) can also be uniquely and stably identified. This problem has been studied in deterministic settings (cf. \cite{RZ20, LZZ22}) under proper regularity assumptions on the source, which makes the framework inapplicable to the random case due to the roughness of the random noise.

\item[(2)] \textbf{Promotion to more complicated stochastic systems.} For example, it is of interest to investigate whether the uniqueness result can be extended to nonlinear equations or equations driven by more general noise structures such as space-time nonseparable noise or fractional noise.
\end{itemize}
We hope to be able to report the progress on these problems elsewhere in the future.


\end{document}